\newtheorem{theo}{Theorem}[section]
\newtheorem{remark}[theo]{Remark}
\newtheorem{lemma}[theo]{Lemma}
\newtheorem{coro}[theo]{Corollary}
\newtheorem{prop}[theo]{Proposition}
\newtheorem{defi}[theo]{Definition}
\newcommand{\qed}{\hspace*{\fill} \rule{7pt}{7pt}}
\def\endproofbox{\hskip 1.3em\hfill\rule{6pt}{6pt}}
\newenvironment{proof}%
{%
\noindent{\it Proof.}
}%
{%
 \quad\hfill\endproofbox\vspace*{2ex}
}
\def\qed{\hskip 1.3em\hfill\rule{6pt}{6pt}}
\begin{document}
%\date{}
%\title{The Ramsey Number for a Tree versus Two Identical Copies of Complete Graphs}
\title{The Ramsey Number for a Forest versus Disjoint Union of Complete Graphs \thanks{Supported  by National Natural Science Foundation of China  (No. 11931002).}}
\author{Sinan Hu \thanks{School of Mathematics, Hunan University, Changsha 410082, P.R. China. Email: husinan@hnu.edu.cn.}
\and   Yuejian Peng \thanks{Corresponding author. School of Mathematics, Hunan University, Changsha, 410082, P.R. China. Email: ypeng1@hnu.edu.cn.}}
\date{}
\maketitle
\begin{abstract}
Given two graphs $G$ and $H$, the Ramsey number $R(G,H)$ is the minimum integer $N$ such that  any coloring of the edges of $K_N$ in red or blue yields a red $G$ or a blue $H$.
Let $v(G)$ be the number of vertices of $G$ and $\chi(G)$ be the chromatic number of $G$.
Let $s(G)$ denote the chromatic surplus of $G$, the cardinality of a minimum  color class taken over all proper colorings of $G$ with $\chi(G)$ colors.
Burr \cite{B} showed that for a connected graph $G$ and a graph $H$ with $v(G)\geq s(H)$,
$R(G,H) \geq (v(G)-1)(\chi(H)-1)+s(H)$.
A connected graph $G$ is called $H$-good if $R(G,H)=(v(G)-1)(\chi(H)-1)+s(H)$.
Chv\'atal  \cite{C} showed that any tree is $K_m$-good for $m\geq2$, where $K_m$ denotes a complete graph with $m$ vertices. 
By applying this result, Stahl \cite{St}  determined the Ramsey number of a forest versus $K_m$. 
Concerning whether a tree is $H$-good for $H$ being disjoint union of complete graphs, Chv\'atal and Harary \cite{CH} showed that any tree is $2K_2$-good, where $tH$ denotes the union of $t$ disjoint copies of graph $H$.  
Sudarsana, Adiwijaya and Musdalifah \cite{SAM} proved that the $n$-vertex path $P_n$ is $2K_m$-good for $n\geq3$ and $m\geq2$, and conjectured that any tree $T_n$ with $n$ vertices is $2K_m$-good.  
Recently, Pokrovskiy and Sudakov \cite{PS} proved that $P_n$ is $H$-good for a graph with $n\geq4v(H)$.  
Balla, Pokrovskiy and  Sudakov \cite{BPS} showed that  for all $\Delta$ and $k$, there exists a constant $C_{\Delta,k}$ such that for any tree $T$ with maximum degree at most $\Delta$ and any $H$ with $\chi(H)=k$ satisfying $v(T)\geq C_{\Delta,k}v(H)\log^4 v(H)$, $T$ is $H$-good. 
% if $n\geq \Omega(|H|\log^4 |H|)$ then every $n$-vertex bounded degree tree $T$ is $H$-good. 

In this paper, we explore the Ramsey number of  forest versus disjoint union of complete graphs. %we study the question whether tree is $H$-good for $H$ being disjoint union of complete graphs. 
We first confirm the conjecture by Sudarsana, Adiwijaya and Musdalifah \cite{SAM} that any tree is $2K_m$-good for $n\geq3$ and $m\geq2$. 
A  key proposition in our proof is that a tree with $n$ vertices can be obtained from any tree with $n$ vertices by performing a series of “Stretching”  and “Expanding” operations. 
On this foundation, we show that these two operations preserve the “$2K_m$-goodness” property, and confirm that any tree is $2K_m$-good.  
%Thus, we show that $R(T_n,2K_m)=(n-1)(m-1)+2$ for $n\geq3$ and $m\geq2$. 
We also prove a  conclusion which yields that $T_n$ is $K_m\cup K_l$-good, where $K_m\cup K_l$ is the disjoint union of $K_m$ and $K_l$, $m>l\geq2$. 
%Furthermore, we study the Ramsey number of a graph $H$ versus a fixed graph $G$, where $H$ is the disjoint union of several graphs. 
Furthermore, %inspired by Burr's lower bound, we observe a construction which yields a general lower bound for arbitrary graph $G$ and $H$. 
we extend the Ramsey goodness of  connected graphs to disconnected graphs and  study the relation between the Ramsey number of the components of  a disconnected graph $\mathrm{F}$ versus a graph $H$. 
We show that if each component of a graph $F$ is $H$-good, then $F$ is $H$-good. 
Our result implies the exact value of $R(F,K_m\cup K_l)$, where $F$ is a forest and $m,l\geq2$.  
It's interesting to explore what kind of graph $F$ can satisfy that $F$ is $H$-good and $G$-good implies that $F$ is $H\cup G$-good when the number of vertices in $F$ is sufficiently large. 
\end{abstract}

Key Words: Ramsey number, Ramsey goodness, Tree, Forest. 

\section{Introduction}
Graphs considered in this paper are finite, undirected and simple.
For a graph $G$, let $V(G)$ be the vertex set of $G$ and $E(G)$ be the edge set of $G$.
Let $v(G)$ denote the order of $G$, i.e., $|V(G)|=v(G)$.
For $u\in V(G)$,  $N(u)=\{v: uv\in E(G)\}$, $N[u]=\{u\}\cup N(u)$ and $d(u)=|N(u)|$. %which is called the degree of $u$.
Denote the chromatic number of $G$ by $\chi(G)$, and let $s(G)$ denote the chromatic surplus of $G$, which is the number of vertices in a minimum  color class over all proper $\chi(G)$-colorings of $V(G)$. 
Let $U$ be a subset of $V(G)$.
Denote $G-U$ as the graph obtained from $G$ by deleting $U$ and all edges incident to $U$.
%For two disjoint graphs $F_1$ and $F_2$, the union of $F_1$ and $F_2$ is defined by $V(F_1\cup F_2)=V(F_1)\cup V(F_2)$ and $E(F_1\cup F_2)=E(F_1)\cup E(F_2)$. In particular,
Denote a complete graph on $n$ vertices by $K_n$, and a path on $n$ vertices by $P_n$. The union of $k$ disjoint copies of a graph $F$ is denoted by $kF$ and the disjoint union of $G$ and $H$ is denoted by $G\cup H$. The join graph of $G$ and $H$, denoted by $G\vee H$, is  the graph obtained from the vertex disjoint union $G\cup H$ by joining each vertex of $G$ to each vertex of $H$.  

Let $G$ and $H$ be graphs without isolated vertices.
The Ramsey number $R(G,H)$ is the minimum integer $N$ such that  any coloring of the edges of $K_N$ in red or blue yields a red $G$ or a blue $H$.
Determining $R(G,H)$ in general is a very challenging problem, there are several excellent surveys on Ramsey numbers. 
In this paper,  we consider a problem related to Ramsey goodness. 
In  \cite{B}, Burr gave the following lower bound.
\begin{theo}\label{Burr} {\rm(Burr \cite{B})}
For a connected graph $G$ and a graph $H$ with $v(G)\geq s(H)$,
$$R(G,H) \geq (v(G)-1)(\chi(H)-1)+s(H).$$
\end{theo}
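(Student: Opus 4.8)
The plan is to prove Burr's lower bound by exhibiting an explicit red/blue coloring of the complete graph on $N-1 = (v(G)-1)(\chi(H)-1)+s(H)-1$ vertices that contains neither a red copy of the connected graph $G$ nor a blue copy of $H$. The guiding intuition is that the two forbidden structures suggest a natural construction: to avoid a red connected $G$, I want the red graph to split into components each too small to contain $G$; to avoid a blue $H$, I want to mimic the structure of an optimal proper $\chi(H)$-coloring of $H$, using the fact that any proper coloring of $H$ with $\chi(H)-1$ colors is impossible and that the smallest color class in a $\chi(H)$-coloring has size $s(H)$.

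First I would partition the $N-1$ vertices into blocks. Take $\chi(H)-1$ blocks each of size $v(G)-1$, together with one additional block of size $s(H)-1$; the total is exactly $(\chi(H)-1)(v(G)-1)+s(H)-1 = N-1$. The coloring is then defined as follows: color every edge lying \emph{inside} a single block red, and color every edge \emph{between} two distinct blocks blue. I now check the two goodness conditions. A red connected subgraph must lie entirely within one block, since all cross-block edges are blue; but each block has at most $v(G)-1$ vertices (the last block has even fewer, $s(H)-1 \le v(G)-1$ using the hypothesis $v(G)\ge s(H)$), so no block can contain the connected graph $G$ on $v(G)$ vertices. Hence there is no red $G$.

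For the blue side, the blue graph is the complete multipartite graph whose parts are the blocks. Suppose for contradiction the blue graph contained a copy of $H$. Any blue edge of $H$ joins vertices in distinct blocks, while vertices of $H$ mapped into the same block form an independent set in $H$. Thus the block-assignment, restricted to the vertices used by $H$, gives a proper coloring of $H$ using at most $\chi(H)$ colors (one color per block). I would argue that this forces every block to receive at least one vertex of $H$, and in particular the small block of size $s(H)-1$ must contain a color class of $H$ of size at most $s(H)-1$. Since any proper $\chi(H)$-coloring has smallest color class of size at least $s(H)$, the color classes landing in blocks of size $v(G)-1$ could be large, but the class forced into the $(s(H)-1)$-block would be too small. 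More carefully: if $H$ used at most $\chi(H)-1$ of the blocks it would be $(\chi(H)-1)$-colorable, contradicting $\chi(H)$; so $H$ meets all $\chi(H)$ blocks, and then the class in the last block has size between $1$ and $s(H)-1$, contradicting the minimality defining $s(H)$. This establishes no blue $H$.

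The main obstacle, and the step demanding the most care, is the blue analysis: one must rule out \emph{both} the possibility that $H$ avoids the small block entirely (squeezing into $\chi(H)-1$ full-size blocks, which is blocked by the chromatic number) \emph{and} the possibility that it uses the small block but with a color class that is too small (blocked by the surplus). Handling the boundary interaction between $\chi(H)$ and $s(H)$ correctly, and confirming that the size inequality $s(H)-1 < s(H)$ genuinely contradicts the definition of chromatic surplus as a minimum over \emph{all} optimal colorings, is where the argument must be tight. The red side is essentially immediate once $v(G)\ge s(H)$ guarantees the small block also fits within the $v(G)-1$ bound.
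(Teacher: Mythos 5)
Your proposal is correct: the block construction with $\chi(H)-1$ red cliques of size $v(G)-1$ plus one red clique of size $s(H)-1$, all cross edges blue, is exactly the standard argument behind Burr's bound, and your case analysis on the blue side (either $H$ misses the small block, contradicting $\chi(H)$, or it meets all blocks and produces a $\chi(H)$-coloring with a class of size at most $s(H)-1$, contradicting the minimality defining $s(H)$) is airtight. The paper states this theorem as a cited result without reproducing a proof, so there is nothing further to compare against.
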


Burr defined $G$ to be $H$-good if the equality $$R(G,H) = (v(G)-1)(\chi(H)-1)+s(H)$$ holds under the conditions of Theorem \ref{Burr}. 
%A family $\mathcal{G}$ of graphs is $H$-good if there is some $n_0$ such that every $G\in \mathcal{G}$ with $|G|\geq n_0$ is $H$-good.

In the 1970s, before the definition of Ramsey-goodness  was given, a well-known result of Chv\'atal \cite{C} showed that any tree is $K_m$-good for every $m\geq2$ and an earlier result of Chv\'atal and Harary \cite{CH} showed that any tree is $2K_2$-good. 
By applying Chv\'atal's theorem, Stahl \cite{St}  determined the Ramsey number of a forest versus $K_m$. 
In 1983,  Burr and Erd\H{o}s \cite{BE} proved that for any fixed $k$ and $m$, there exists $n_0$ such that the family of connected graphs with bandwidth at most $k$ and at least $n_0$ vertices is $K_m$-good, where the bandwidth of a graph $G$ is the smallest number $k$ such that there is an ordering $v_1,\dots,v_n$ of $V(G)$ such that each edge $v_iv_j$ satisfies $|v_i-v_j|\leq k$. 
%This result provides evidence for that graphs with poor expansion properties are often good. 
This result was recently extended by Allen, Brightwell and Skokan \cite{ABS} that for each fixed graph $H$ and $k$,  there exists $n_0$ such that the family of connected graphs with bandwidth at most $k$ and at least $n_0$ vertices is $H$-good. 
These two results implies a path (with bandwidth  $1$) or a cycle (with bandwidth  $2$) with sufficiently  large number of vertices has good goodness properties. 
Results without the assumption of  sufficiently  large number of vertices are also interesting. 
%Accurate characterization results are also interesting. 
In \cite{SAM}, Sudarsana, Adiwijaya and Musdalifah  showed that $P_n$ is $2K_m$-good for $n\geq3$ and $m\geq2$, and conjectured that any tree $T_n$ with $n$ vertices is $2K_m$-good. 
Recently, Pokrovskiy and Sudakov \cite{PS} proved that for a fixed graph $H$, the path on $n$ vertices  with $n\geq4v(H)$ is $H$-good. 
Balla, Pokrovskiy and  Sudakov \cite{BPS} showed that for all $\Delta$ and $k$, there exists a constant $C_{\Delta,k}$ such that for any tree $T$ with maximum degree at most $\Delta$ and any $H$ with $\chi(H)=k$ satisfying $v(T)\geq C_{\Delta,k}v(H)\log^4 v(H)$, $T$ is $H$-good. 
In \cite{LLD}, Lin,   Li and  Dong proved that if $T_n$ is $G$-good and $s(G)=1$, then $T_n$ is $K_1\vee G$-good. 
For other results concerning Ramsey-goodness graphs, we refer the reader to the survey papers by Conlon, Fox and Sudakov \cite{CFS}, and Radziszowski \cite{R}. 
In this paper, we consider the Ramsey number of a forest versus disjoint union of complete graphs. 
In Section $2$, we show  the following result  which is  a continuation of Chv\'atal's  classical result. 
It also  confirms the mentioned  conjecture  by Sudarsana, Adiwijaya and Musdalifah \cite{SAM}. 

\begin{theo}\label{Tn}
Let $n\geq 3$ and $m\geq2$ be integers. Let $T_n$ be a tree with order $n$, then $$R(T_n,2K_m)=(n-1)(m-1)+2.$$
\end{theo}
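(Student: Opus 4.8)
The plan is to separate the two inequalities. The lower bound is immediate from Burr's Theorem~\ref{Burr}: since $\chi(2K_m)=m$ and any proper $m$-colouring of $2K_m$ must use each colour exactly once on each of the two cliques, every colour class has exactly two vertices, so $s(2K_m)=2$; as $v(T_n)=n\ge 2=s(2K_m)$, Theorem~\ref{Burr} gives $R(T_n,2K_m)\ge (n-1)(m-1)+2$. Everything therefore reduces to proving that $T_n$ is $2K_m$-good, i.e. the matching upper bound: in every red/blue colouring of $K_N$ with $N=(n-1)(m-1)+2$ and no blue $2K_m$ there is a red $T_n$.

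Following the structure advertised in the abstract, I would establish the upper bound by induction over the \emph{shape} of the tree rather than by a direct structural analysis of an arbitrary colouring. Concretely, I would take as base case the fact, due to Sudarsana, Adiwijaya and Musdalifah~\cite{SAM}, that the path $P_n$ is $2K_m$-good, and then invoke the key proposition that every $n$-vertex tree is reachable from $P_n$ by a finite sequence of \emph{Stretching} and \emph{Expanding} operations. It then suffices to prove the single inductive claim that each operation preserves $2K_m$-goodness: if $T$ is an $n$-vertex tree that is $2K_m$-good and $T'$ is obtained from $T$ by one such operation, then $T'$ is $2K_m$-good. Since goodness is exactly the upper bound $R(\,\cdot\,,2K_m)\le (n-1)(m-1)+2$, this finishes the proof.

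To prove that an operation preserves goodness I would argue by contraposition inside a fixed colouring of $K_N$: assume there is no blue $2K_m$ and produce a red $T'$. Because $T$ is $2K_m$-good and $N=R(T,2K_m)$, the colouring already contains a red copy of $T$, and the task is to perform the local surgery dictated by the operation (re-routing or sprouting the few edges in which $T'$ differs from $T$) so as to convert this red $T$ into a red $T'$. The leverage is that the \emph{absence} of a blue $2K_m$ makes the colouring red-rich. Here I expect to use the greedy embedding lemma that a graph of minimum degree at least $n-1$ contains every $n$-vertex tree, together with the observation that if a red copy of $T_{n-1}=T'-\ell$ cannot be grown by its missing leaf at the attachment vertex $p$, then the image of $p$ must be blue-complete to the entire complement of the embedded tree. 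Such a blue-dominating vertex sits on $N-(n-1)=(n-1)(m-2)+2=R(T_n,2K_{m-1})$ uncovered vertices, so a sub-induction on $m$ (based at the $2K_2$ case of Chv\'atal and Harary~\cite{CH}, and using Chv\'atal~\cite{C} for single blue cliques) yields blue cliques in that uncovered set.

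The step I expect to be the genuine obstacle is the very last one: manufacturing \emph{two disjoint} blue $K_m$'s rather than one. A single blue-dominating vertex $p$ readily produces one blue $K_m$ (itself together with a blue $K_{m-1}$ in its neighbourhood), and the sub-induction even produces two disjoint blue $K_{m-1}$'s among the uncovered vertices; but both would have to borrow the \emph{same} vertex $p$ to become $K_m$'s, which destroys disjointness. This is the combinatorial shadow of the fact that the surplus rises only from $1$ to $2$, so the Ramsey number may exceed Chv\'atal's value $R(T_n,K_m)$ by at most one; indeed, deleting any one blue $K_m$ leaves exactly $R(T_{n-1},K_m)=(n-2)(m-1)+1$ vertices, which recovers a red $T_{n-1}$ but is not enough to force a second blue $K_m$ on its own. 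Overcoming this is precisely where the near-extremal configurations (the $m-1$ red cliques of order $n-1$ plus one extra blue-dominating vertex) must be pinned down, and where I expect the Stretching/Expanding surgery to do its real work: the operation must be arranged so that either a second, vertex-disjoint dominating structure is forced, or a red $T'$ appears outright.
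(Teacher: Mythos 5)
Your top-level plan coincides with the paper's: the lower bound from Theorem \ref{Burr}, the base case that $P_n$ is $2K_m$-good (Lemma \ref{Pn}, from \cite{SAM}), the reduction of an arbitrary tree to $P_n$ via Stretching and Expanding (Proposition \ref{remark}), and an induction on $m$ anchored at the $2K_2$ case of \cite{CH}. But the entire mathematical content of the theorem lives in the two preservation lemmas (Lemmas \ref{1} and \ref{2}), and your proposal stops exactly where that work begins: you correctly diagnose that the natural argument produces two blue $K_m$'s sharing a single vertex rather than two disjoint ones, and then defer the resolution to an unspecified arrangement of the surgery (``either a second, vertex-disjoint dominating structure is forced, or a red $T'$ appears outright''). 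That unresolved step is not a technicality; it is the whole proof.

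What actually closes it in the paper is the following. After extracting a red copy of $T_n^*-\{b\}$ on $n-1$ vertices and, from the remaining $(n-1)(m-2)+2$ vertices, a blue $2K_{m-1}$ with parts $A$ and $B$ (this step needs the $m$-induction hypothesis for \emph{all} $n$-vertex trees, not just for $T_n^*$, so the induction on $m$ must be run over the whole family of trees simultaneously rather than as a local sub-induction inside one preservation step), one does not seek a second blue-dominating vertex. Instead one uses \emph{several distinct vertices of the already-embedded red tree} (and of further blue $K_{m-1}$'s obtained from Chv\'atal's theorem \cite{C} in the leftover) as independent candidates for extending $A$ and $B$: if the designated vertex $a$ is blue-complete to $A\cup B$ and some second tree vertex, say $u$, is also blue-complete to $A$, then $\{u\}\cup A$ and $\{a\}\cup B$ are vertex-disjoint blue $K_m$'s precisely because $u\neq a$; otherwise each such candidate sends a red edge into its clique, and these red edges are exactly the new edges needed to reassemble a red $T_n^{**}$ from the red $T_n^*-\{b\}$. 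In the Expanding case this dichotomy must be iterated up to $d-2$ times, peeling off blue cliques $C_1,\dots,C_{d-2}$. None of this case analysis, nor the careful choice of which leaf $b$ to delete and which tree vertices serve as extenders, appears in your proposal, and the minimum-degree greedy embedding lemma you invoke plays no role in the actual argument. The proposal is therefore a correct skeleton with the decisive step left open.
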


%The proof of Theorem \ref{Tn} will be given in Section $3$. 

The  proof of Theorem \ref{Tn} will be given in Section $2$. 
In Section $3$, we show a  result which yields that $T_n$ is $K_m\cup K_l$-good, where $n\geq 3$ and $m>l\geq2$ are integers. 
In Section $4$, we introduce a construction which yields a general lower bound of $R(G,H)$ for arbitrary graphs $G$ and $H$.  
On  this foundation, we extend the Ramsey goodness of  connected graphs to disconnected graphs and  explore the relation between the Ramsey number of a disconnected graph $\mathrm{F}$ versus a graph $H$ and the Ramsey number of its components versus $H$. 
We extend an upper bound given by Gould and Jacobson \cite{GJ},  and show that if each component of a graph $F$ is $H$-good, then $F$ is $H$-good. 
%Our result implies the exact value of $R(F,K_m\cup K_l)$, where $F$ is a forest and $m,l\geq2$.  
Furthermore, we  will apply the Ramsey goodness results of trees to obtain the Ramsey number of a forest versus disjoint union of complete graphs.  Next, we outline the  idea of the proof of Theorem \ref{Tn}.

\subsection{Two Operations on a Tree and Outline of the Proof of Theorem \ref{Tn}}
A  key observation in the proof of Theorem \ref{Tn} is  that any  tree with $n$ vertices can be obtained from $P_n$ by performing a series of two operations. 
Furthermore, we show that these two operations preserve the “$2K_m$-goodness” property. 
Let us describe these two operations  precisely below.

\

%In the proof of Theorem \ref{Tn}, we will apply the following two operations on a tree.
%\hspace*{\fill} \\
\noindent{\bf Stretching a tree $T$ at a leaf $a$ :} {\em Let $T$ be a tree with $n\geq 3$ vertices and $a$ be a leaf in $T$.
Let $T'$ be obtained from $T$ by deleting a leaf $b$ (other than $a$) of $T$ and adding a new vertex connecting to $a$. We say that $T'$ is obtained by Stretching $T$ at  leaf $a$.}

\

\noindent{\bf Expanding a tree $T$ at a vertex $u$ :} {\em Let $2\leq d\leq n-2$ and $T$ be a tree with $n\geq4$ vertices.  Let $u\in V(T)$ with $N(u)=\{z_0,z_1,\dots,z_{d-1}\}$ satisfying that $z_i$ is a leaf of $T$ for each $i\in[d-1]$ and $z_0$ is not a leaf.
Let $T'$ be obtained from $T$ by deleting a leaf $b\in T- N[u]$ and adding a new vertex connecting to $u$. We say that $T'$ is obtained by Expanding $T$ at vertex $u$.}

\begin{defi}\label{defi}
We say that a property $\mathcal{P}$ is Stretching-preserving (or Expanding-preserving) if a tree $T$ satisfying $\mathcal{P}$ implies that $T'$ also satisfying  $\mathcal{P}$, where $T'$ is obtained by Stretching (or Expanding) $T$ at a leaf (or a vertex).
\end{defi}

 In Section $2.2$, we will prove the following key observation.
\begin{prop}\label{remark}
Given a tree $T$ on $n$ vertices, we can obtain any tree on $n$ vertices from $T$ by applying Stretching and Expanding multiple times.
\end{prop}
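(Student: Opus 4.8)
The plan is to route every transformation through the path $P_n$. Since each operation deletes one leaf and adds one leaf, the vertex count stays equal to $n$ throughout, so we never leave the family of $n$-vertex trees. Although a single Stretching or Expanding need not be reversible, it suffices to prove the two one-sided statements: (i) every tree on $n$ vertices can be transformed into $P_n$, and (ii) $P_n$ can be transformed into every tree on $n$ vertices. Granting (i) and (ii), any source tree $T_1$ and target tree $T_2$ are joined by concatenating a reduction $T_1\to\cdots\to P_n$ with a construction $P_n\to\cdots\to T_2$, which is exactly the assertion of the proposition.

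For (ii) I would build the target tree $T$ out of $P_n$ by processing its vertices from the leaves inward, say in reverse order of a breadth-first search rooted at a fixed leaf. Throughout the construction I keep a \emph{reservoir}: a pendant path (initially almost all of $P_n$) that both supplies the leaves $b$ that each operation must delete, taken from its far end, and provides the single non-leaf neighbor required by Expanding. The engine is this: to give a vertex $u$ its correct degree, repeatedly Expand at $u$; when $u$ is processed, its already-placed neighbors are leaves and its only non-leaf neighbor is the edge toward the reservoir, so the hypothesis of Expanding (a neighborhood with exactly one non-leaf vertex $z_0$, the rest leaves) holds, and each Expand pulls a leaf off the reservoir and attaches it at $u$. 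Once $u$ carries the right number of pendant leaves, each such leaf is grown into the desired subtree by Stretching (to lengthen a pendant path) and by recursing. Ordering the work so that the reservoir is always available as the unique non-leaf neighbor, and coincides at the end with the last branch of $T$, is what keeps every intermediate operation legal.

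Direction (i) is the mirror image. Given $T\ne P_n$, there is a branch vertex, and I would first use Expanding to \emph{consolidate} leaves until some leaf $b$ becomes adjacent to a vertex of degree at least $3$; a short degree count shows that Stretching at the endpoint of a longest pendant path while deleting such a leaf $b$ strictly decreases the number of leaves (when the neighbor of $b$ has degree at least $3$, the two leaves lost are not compensated by a new degree-$2$ vertex turning into a leaf). Iterating this drives the leaf count down to $2$, that is, to $P_n$.

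The main obstacle throughout is precisely the rigidity of the two operations: Stretching can only sprout a new leaf at an existing leaf, and Expanding can only do so at a vertex with exactly one non-leaf neighbor. Hence a leaf cannot be moved to an arbitrary vertex; in particular one cannot directly attach a leaf to a vertex that already has two or more non-leaf neighbors, such as the centre of the spider $S(2,2,2)$. The crux of the proof is the bookkeeping that evades this: choosing the order in which branches are treated and, when necessary, temporarily shrinking a neighboring branch down to a single leaf so that the target vertex momentarily meets the Expanding hypothesis, then regrowing it. The delicate point is that the plain leaf count is not monotone under a single operation (a consolidating Expand can briefly raise it), so termination should be organised either as two-phase \emph{consolidate-then-stretch} rounds or via a lexicographic potential such as (number of leaves, number of branch vertices); the base cases $n=3,4$ are immediate.
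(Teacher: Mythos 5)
Your overall route is the same as the paper's: both directions are proved by passing through $P_n$, and your ``reservoir'' in direction (ii) is exactly the paper's device of reserving a terminal segment $b_1,\dots,b_{n-t_0-3}$ of $P_n$ and consuming these vertices one at a time as the deletable leaves, while the branches of the target tree are grown along its longest path by Expanding (to create the pendant neighbours of a branch vertex) and Stretching (to lengthen them into the required subtrees). To your credit you also isolate the real difficulty of that direction --- Expanding at $u$ is legal only when all but one neighbour of $u$ is a leaf, so a vertex with two non-leaf neighbours (such as the centre of $S(2,2,2)$) cannot be expanded directly --- which the paper's write-up addresses only implicitly by ordering the work along the longest path; your ``shrink-then-regrow'' bookkeeping would need to be made just as explicit as the paper's step-by-step labelling before direction (ii) is complete, but the idea matches.

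The one place where your plan as written does not go through is direction (i). You propose a consolidate-then-stretch loop driven by the number of leaves, and you concede that this quantity is not monotone (a consolidating Expand can raise it); the lexicographic potential you name, (number of leaves, number of branch vertices), does not obviously decrease either, since an Expand at a degree-$2$ vertex creates a new branch vertex and can increase the leaf count at the same time. So termination is genuinely not established. It is also unnecessary: Expanding is not needed for the reduction at all. Fix a longest path $P$ of $T$ with end $v$. As long as some vertex lies off $P$, each branch subtree hanging off $P$ contains a leaf of $T$, so one may Stretch at the current end of the (growing) path while deleting such an off-path leaf; every step strictly decreases the number of off-path vertices, so after exactly $|B^1|+\dots+|B^l|$ steps one reaches $P_n$. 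This is the paper's argument for (i), and substituting it for your consolidation loop closes the gap.
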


It's easy to see that  Proposition \ref{remark} will imply the following corollary.

\begin{coro}\label{111}
If a tree $T$ satisfies property $\mathcal{P}$, and $\mathcal{P}$ is Stretching-preserving  and Expanding-preserving, then every tree satisfies $\mathcal{P}$.
\end{coro}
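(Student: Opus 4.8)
The plan is to show that all $n$-vertex trees form a single class under the two operations, using $P_n$ as a hub: since sequences of operations compose, it suffices to prove that every tree can be transformed into $P_n$ and, conversely, that $P_n$ can be transformed into every tree. I would obtain both directions at once by exhibiting, for an arbitrary tree $T$, a reduction $T=T_0,T_1,\dots,T_k=P_n$ in which \emph{every single step is reversible} by the allowed operations; reversing the sequence then transforms $P_n$ into $T$, and composing the reduction of one tree with the reversed reduction of another transforms any tree into any other.

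\emph{Reducing $T$ to $P_n$.} For this direction I would use Stretching only, with the diameter (the length of a longest path) as a strictly increasing monovariant. If $T\neq P_n$, fix a longest path and let $a$ be one of its endpoints, which is a leaf. Since $T\neq P_n$, some leaf $b$ lies off this longest path. Stretching $T$ at $a$ while deleting $b$ appends a new vertex $c$ to the end $a$ and removes an off-path leaf, so the longest path through $a$ grows by one and the diameter strictly increases. As the diameter is at most $n-1$, with equality exactly when the tree is $P_n$, iterating terminates at $P_n$.

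\emph{Reversibility.} I would split the Stretches above according to the degree of $b'$, the neighbour of the deleted leaf $b$. If $d(b')=2$ (so $b$ ends a pendant path of length at least two), the step preserves the number of leaves and its reverse is again a Stretch: in $T_{i+1}$ the shortened pendant path ends at a leaf $x$, and Stretching at $x$ while deleting the freshly created path-end $c$ restores $T_i$. If $d(b')\ge 3$ (so $b$ hangs directly on a branch vertex $w=b'$), the step decreases the number of leaves, and its reverse must re-attach a pendant at $w$; this is exactly an Expanding of $T_{i+1}$ at $w$, deleting the path-end $c$ (which lies outside $N[w]$) and adding a pendant at $w$, \emph{provided} $w$ has precisely one non-leaf neighbour, all other neighbours leaves, and $2\le d(w)\le n-2$ at that moment. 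To guarantee this I would perform the whole reduction in a fixed order: root $T$ at a peripheral leaf and process branch vertices in post-order, first shrinking all of a branch vertex's away-from-root subtrees to single leaves by pendant-shortening Stretches, and only then trimming those leaves. At each trimming step the branch vertex $w$ then has all its away-from-root neighbours as leaves and exactly one neighbour toward the root, which is internal, so $w$ sits in precisely the Expanding configuration needed to reverse the step.

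The main obstacle is this last point: making the leaf-increasing (branch-trimming) steps reversible. Reversing them forces us to \emph{create} a new leaf at an interior branch vertex, and Expanding can do this only when the vertex has the rigid structure of one non-leaf neighbour and all others leaves, together with the bound $2\le d\le n-2$ and a deletable leaf available outside $N[w]$. The delicate part of the write-up will be to fix a processing order, and to treat small cases and the near-star configuration (where $d\le n-2$ is tight) separately, so that every branch-trimming step meets $w$ in exactly this configuration with a suitable reservoir leaf to delete. Granting this bookkeeping, reversing the reduction yields $P_n\to T$, and the hub argument then gives the Proposition.
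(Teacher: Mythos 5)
Your overall strategy is the same as the paper's: the Corollary is deduced from Proposition \ref{remark}, which the paper proves by exactly your ``hub'' scheme --- first reduce an arbitrary $n$-vertex tree to $P_n$ by Stretchings along a longest path, then transform $P_n$ into an arbitrary target tree. The organizational difference is that the paper builds $P_n\to T$ by a \emph{direct} construction (extend the path only as far as the first branch vertex $u_1$, attach the whole branch $B^1$ by Expandings while $u_1$ still sits next to the frontier of the partially built path, then extend to $u_2$, and so on), whereas you try to obtain $P_n\to T$ by reversing your reduction step by step. That is where the genuine gap lies, and it is more than bookkeeping. By Definition~\ref{defi}'s Expanding operation, a new pendant can only be attached to a vertex $u$ whose neighbourhood consists of exactly one non-leaf and $d-1\ge 1$ leaves; neither operation can create a pendant at a vertex both of whose neighbours are internal. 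Hence the reduction in your first paragraph is simply not reversible: for the $n$-vertex tree consisting of a path $v_1\cdots v_{n-1}$ with one extra leaf $z$ at an interior vertex $v_i$ ($3\le i\le n-3$), your single Stretch deletes $z$ and yields $P_n$, but no Expanding at $v_i$ is possible in $P_n$ because $v_i$ has two non-leaf neighbours.

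Your post-order refinement is the right repair idea, but as written (``shrink the subtrees to single leaves, then trim those leaves'') it still fails at the \emph{last} trim of each branch vertex $w$: once the final pendant at $w$ is removed, $w$ again has no leaf neighbour (indeed $d(w)$ may drop to $1$), so that step cannot be undone by Expanding either. What is actually needed is to trim all but one pendant at each branch vertex, processed bottom-up so that the surviving pendant leaves $w$ adjacent to an end of the current tree at every moment a reverse Expanding must be performed; carried out carefully this works and essentially reproduces the paper's frontier-by-frontier construction, but it is precisely the content you deferred with ``granting this bookkeeping,'' and the naive versions of it that your sketch literally describes do fail.
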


In Section $2$, we show that the property  “$2K_m$-goodness” is Stretching-preserving  and Expanding-preserving, and prove Theorem \ref{Tn}.

\section{Tree is $2K_m$-good}
%In subsection $3.1$ and $3.2$, we give the proof of Theorem \ref{Tn} and Observation \ref{remark}, respectively. 
\subsection{Proof of Theorem \ref{Tn}}
In this subsection we prove Theorem \ref{Tn}. 
We first prove the  crucial lemmas that the property “$2K_m$-goodness” is Stretching-preserving and Expanding-preserving  under some conditions. 
\begin{lemma}\label{1}
Let $n\geq 3$ and $m\geq2$ be integers.
Assume that $R(T,2K_{m-1})\leq (n-1)(m-2)+2$ holds for any tree $T$ with order $n$.
Let $T_n^*$ be a tree with $n$ vertices and $T_n^{**}$ be obtained by Stretching $T_n^{*}$ at a leaf $a$  (see Figures \ref{T3} and \ref{T4}).
%For any leaf $a\in V(T_n^*)$ and any leaf $b\in V(T_n^*)\setminus\{a\}$,
If $R(T_n^*,2K_m)\leq (n-1)(m-1)+2$, then $R(T_n^{**},2K_m)\leq (n-1)(m-1)+2$.
 \end{lemma}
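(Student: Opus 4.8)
The plan is to argue by contradiction: fix the target value $N=(n-1)(m-1)+2$, take a red/blue coloring of $K_N$ containing no blue $2K_m$, and produce a red $T_n^{**}$. First I would invoke the hypothesis $R(T_n^*,2K_m)\le N$: since there is no blue $2K_m$, the coloring must contain a red copy of $T_n^*$, say via an embedding $\phi$, and I set $\alpha=\phi(a)$ and $\beta=\phi(b)$. The key structural remark is that $T_n^{**}$ is exactly $(T_n^*-b)$ with one extra pendant $c$ attached at $a$. Hence if $\alpha$ has even a single red edge to a vertex $w$ lying outside $\phi(V(T_n^*)\setminus\{b\})$, then $\phi$ restricted to $T_n^*-b$ together with $c\mapsto w$ is already a red $T_n^{**}$, and we are done. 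So I may assume $\alpha$ is blue-complete to the set $S:=V(K_N)\setminus\phi(V(T_n^*)\setminus\{b\})$. The arithmetic that drives the whole proof is that $|S|=N-(n-1)=(n-1)(m-2)+2$, which is precisely the quantity appearing in the induction assumption.

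Next I would feed $S$ into the induction assumption: since $|S|=(n-1)(m-2)+2\ge R(T_n^*,2K_{m-1})$, the coloring restricted to $S$ contains either a red $T_n^*$ or a blue $2K_{m-1}$. In the blue $2K_{m-1}$ case, write the two disjoint blue cliques as $B_1$ and $B_2$. Because $\alpha$ is blue-complete to all of $S$, the set $\{\alpha\}\cup B_1$ is a blue $K_m$; the goal is then to upgrade $B_2$ to a second, vertex-disjoint blue $K_m$, i.e.\ to locate a vertex $x\notin\{\alpha\}\cup B_1\cup B_2$ that is blue-complete to $B_2$. Any such $x$ yields a blue $2K_m$, contradicting our assumption, so in fact the absence of a red $T_n^{**}$ must force the \emph{nonexistence} of such a second apex. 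In the red $T_n^*$ case, I would re-run the extension step of the first paragraph on this new copy, exploiting that its image of $a$ lies in $S$ while $\alpha\notin S$ is a fresh vertex blue-joined to every point of $S$; iterating this either extends some copy to a red $T_n^{**}$ or accumulates blue cliques together with apexes.

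The hard part will be closing the blue $2K_{m-1}$ case, namely guaranteeing the second disjoint blue $K_m$ (equivalently the second apex) and, when that is genuinely unavailable, converting the resulting rigidity back into the red pendant extension rather than a contradiction. I expect this to be the main obstacle because upgrading only \emph{one} of $B_1,B_2$ via the single apex $\alpha$ is automatic, while the other upgrade is not; moreover a naive count shows the leftover of $S$ after removing $B_1\cup B_2$ can be as small as $(m-2)(n-3)$ vertices, which vanishes when $n=3$. I would attack this by choosing the red copy of $T_n^*$ extremally — for instance maximizing the red-degree of the image $\alpha$ of $a$ (or minimizing the portion of $\alpha$'s red-neighborhood lying inside the embedded tree) — so that the failure to extend the pendant forces the common blue-neighborhood of $B_2$ to be large enough to supply the second apex, while in the complementary situation the extremality directly exhibits an unused red neighbor of $\alpha$ and hence the red $T_n^{**}$. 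Throughout, the only inputs are the two supplied bounds $R(T_n^*,2K_m)\le N$ and $R(T_n^*,2K_{m-1})\le(n-1)(m-2)+2$, together with the size identity $|S|=(n-1)(m-2)+2$ that links them.
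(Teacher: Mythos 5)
Your opening reductions coincide with the paper's: obtain a red $T_n^*$, observe that a single red edge from $\phi(a)$ to the outside of $\phi(T_n^*-b)$ already yields $T_n^{**}$, conclude $\alpha$ is blue-complete to the leftover set $S$ of size $(n-1)(m-2)+2$, and apply the inductive hypothesis inside $S$. (One small improvement available to you: apply that hypothesis to the tree $T_n^{**}$ itself rather than to $T_n^*$, which disposes of the ``red tree in $S$'' subcase immediately instead of requiring your iteration.) However, the proposal stops exactly where the proof actually begins. You explicitly leave open the blue $2K_{m-1}$ case --- finding the second disjoint blue $K_m$, or converting its absence into a red $T_n^{**}$ --- and the extremal device you suggest (choosing the embedding to maximize the red degree of $\alpha$) does not obviously close it, because the needed move is not ``extend the pendant at $\alpha$'' at all: when $\alpha$ is blue-locked, one must \emph{relocate} the image of $a$ into one of the blue cliques and grow the new pendant from there, which your plan never contemplates.

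Concretely, the paper resolves this case with three ingredients you are missing. First, it reduces to $T_n^*$ not a path (the path case is trivial since then $T_n^{**}=T_n^*$), which guarantees a second spare leaf $c\notin\{a,u\}$ with neighbour $v$; the leaf $b$ is dropped from the red copy from the outset and $c$ is recycled into the residual pool $F=K_N-(T_n^*-\{b\})-A-B+\{c\}$, so that $|V(F)|=(n-3)(m-2)+1$ is exactly Chv\'atal's threshold $R(T_{n-2},K_{m-1})$ --- this is the fix for the deficient count $(n-3)(m-2)$ you noticed. Second, Chv\'atal's theorem in $F$ gives either a red $T_n^{**}$ minus two pendant vertices (completed by one forced red edge into each of $A$ and $B$, forced because $\alpha$'s blue-completeness would otherwise supply the second blue $K_m$) or a third blue clique $C$. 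Third, in the latter case the forced red edges $ux_A$ ($x_A\in A$), $vx_B$ ($x_B\in B$) and $x_Ax_C$ ($x_C\in C$) --- each forced since its failure pairs a blue $K_m$ with $\{\alpha\}\cup A$ or $\{\alpha\}\cup B$ --- allow the embedded tree to be re-routed: $a$ is re-embedded as $x_A$, its new pendant as $x_C$, and $c$ as $x_B$. Without the spare leaf $c$, the third clique $C$, and this relocation step, the argument does not go through, so as it stands the proposal has a genuine gap at its central case.
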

\begin{proof}
If $n=3$, a tree with $3$ vertices must be $P_3$ and the result holds.
If $T_n^*=P_n$ is a path, then $T_n^{**}=P_n$ and the result holds.
So we may assume that $n\geq4$ and $T_n^*$ is not a path.

Let $u$ be the vertex adjacent to $a$ in $T_n^*$.
Since $T_n^*$ is not a path and $n\geq4$, $T_n^*$ has at least two leaves $b$ and $c$ in $V(T_n^*)\setminus\{a,u\}$.
Let $vc$ be an edge (as in Figure \ref{T3}).
We remark that it's possible that $v=u$.
\begin{figure}[H]
\centering
\begin{minipage}{5cm}
\includegraphics[width=0.9\textwidth, height=0.7\textwidth]{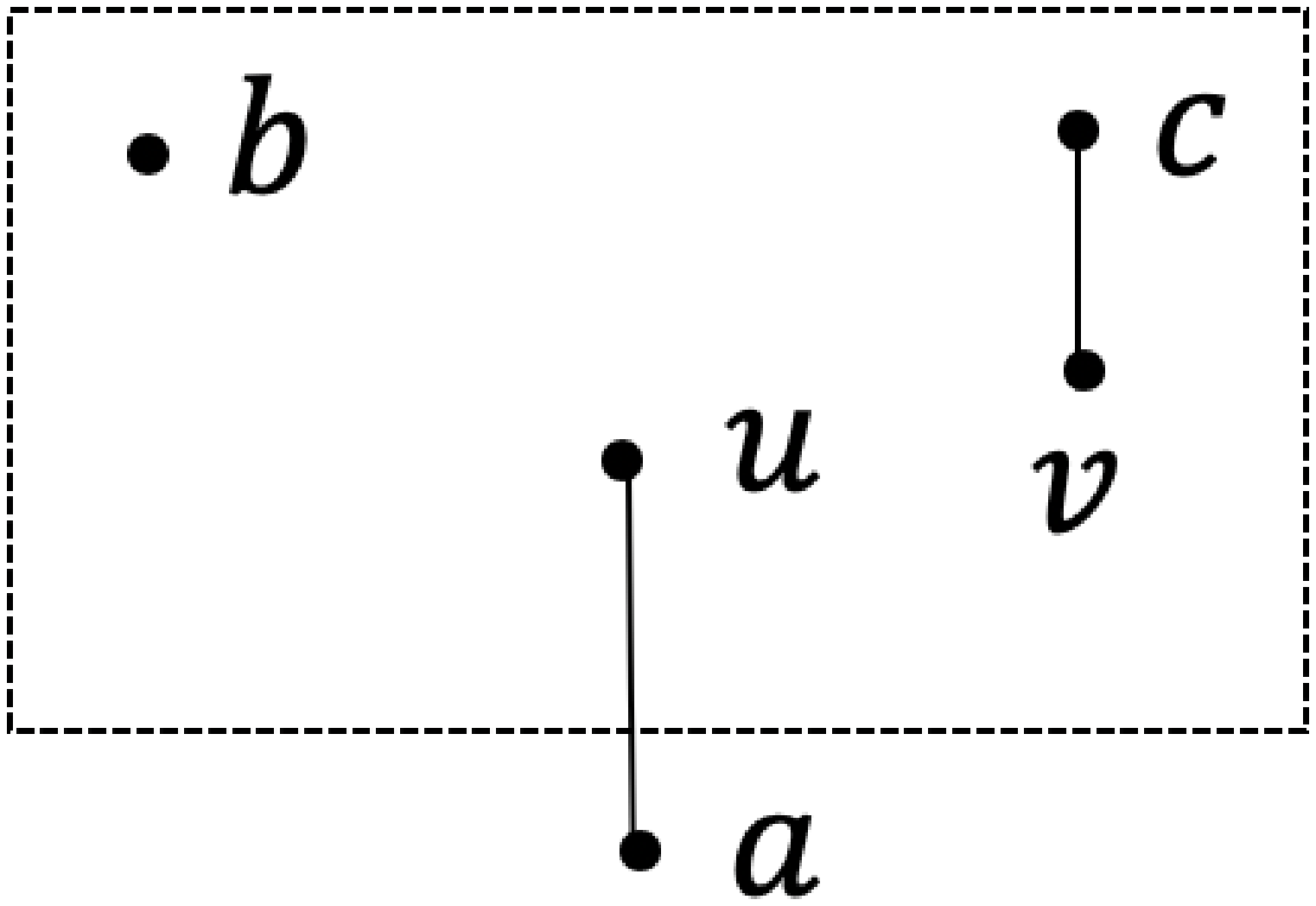}
\caption{$T_n^*$}
   \label{T3}
\end{minipage}
\begin{minipage}{5cm}
\includegraphics[width=0.87\textwidth, height=0.7\textwidth]{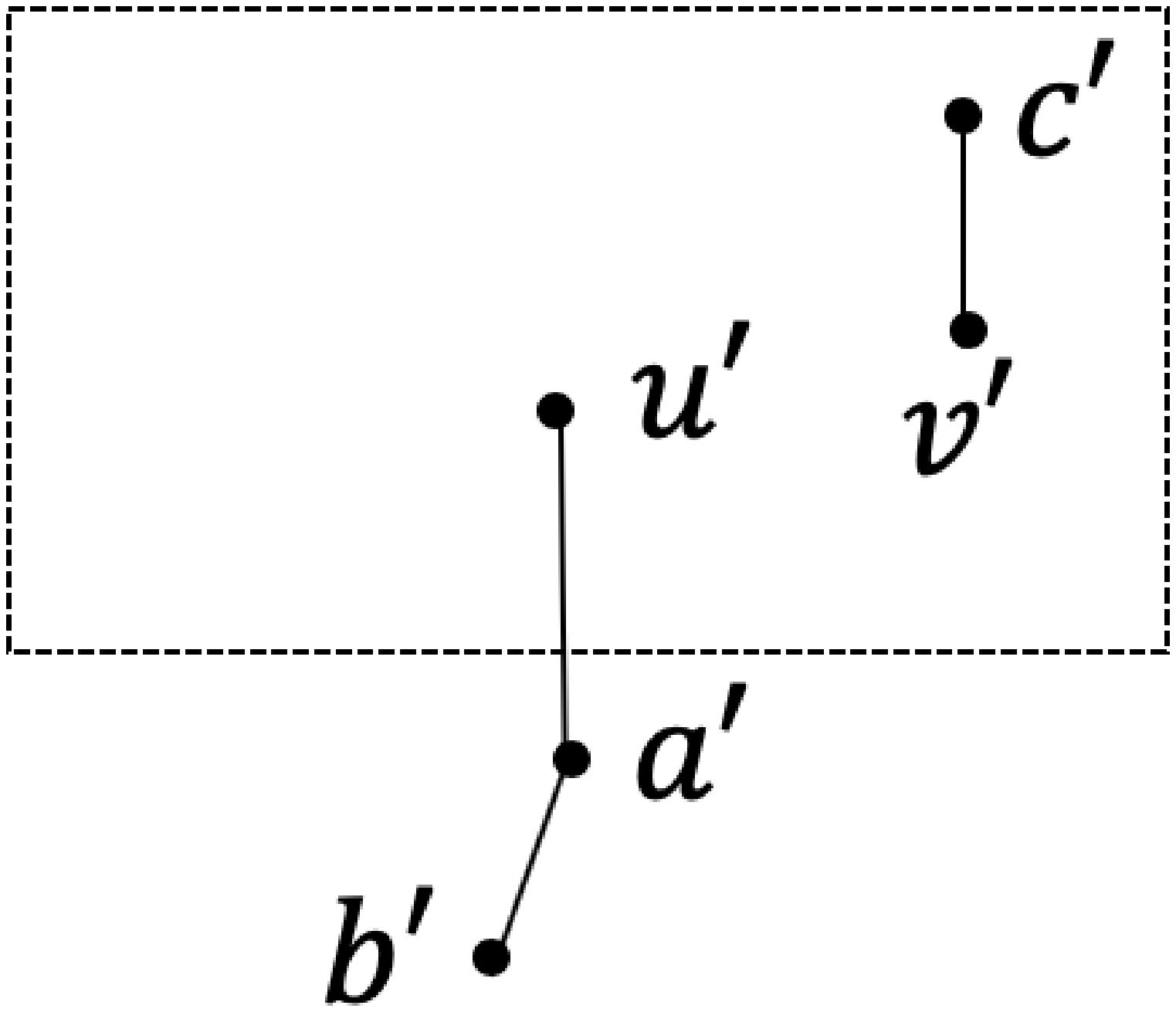}
\caption{$T_n^{**}$}
   \label{T4}
\end{minipage}
\end{figure}
\noindent
Let $N=(n-1)(m-1)+2$ and the edges of $K_N$ be colored by red or blue.
We will show that $K_N$ contains a red $T_n^{**}$ or a blue $2K_m$.
Since $R(T_n^*,2K_m)\leq (n-1)(m-1)+2$, then $K_N$ contains a red $T_n^{*}$ or a blue $2K_m$.
We just need to consider that $K_N$ contains a red $T_n^{*}-\{b\}\subseteq T_n^{*}$, whose order is $n-1$. 
Since $N-(n-1)=(n-1)(m-2)+2$ and $R(T,2K_{m-1})\leq (n-1)(m-2)+2$ holds for any tree $T$ with order $n$, then $K_N-(T_n^{*}-\{b\})$ contains a red $T_n^{**}$ or a blue $2K_{m-1}$.
We just need to consider that $K_N-(T_n^{*}-\{b\})$ contains a blue $2K_{m-1}$, denoted by $A$ and $B$ (see Figure \ref{T5}).
We note that the edges between $a$ and $A$, and $a$ and $B$ are all blue.
Otherwise there will be a red copy of $T_n^{**}$ and we are done.
Let $F=K_N-(T_n^{*}-\{b\})-A-B+\{c\}$.
Clearly $|V(F)|=(n-3)(m-2)+1$.
By Chv\'atal's theorem, $F$ contains a red $T_{n-2}=T_n^{**}-\{b',c'\}$ or a blue $K_{m-1}$.

Case $1$. $F$ contains a red $T_n^{**}-\{b',c'\}$ as in Figure \ref{T5}.
Note that there exist $x_A'\in A$ and $x_B'\in B$ such that $x_A'v'$ and $x_B'a'$ are red.
Otherwise, $\{v'\}\cup A$ and $\{a\}\cup B$ or $\{a'\}\cup B$ and $\{a\}\cup A$ form a blue copy of $2K_m$, we are done.
Hence $T_n^{**}-\{b',c'\}+\{x_A',x_B'\}$ contains a red copy of $T_n^{**}$ (see Figure \ref{T5}), and we are done.
\begin{figure}[H]
\centering
    \includegraphics[height=5.5cm]{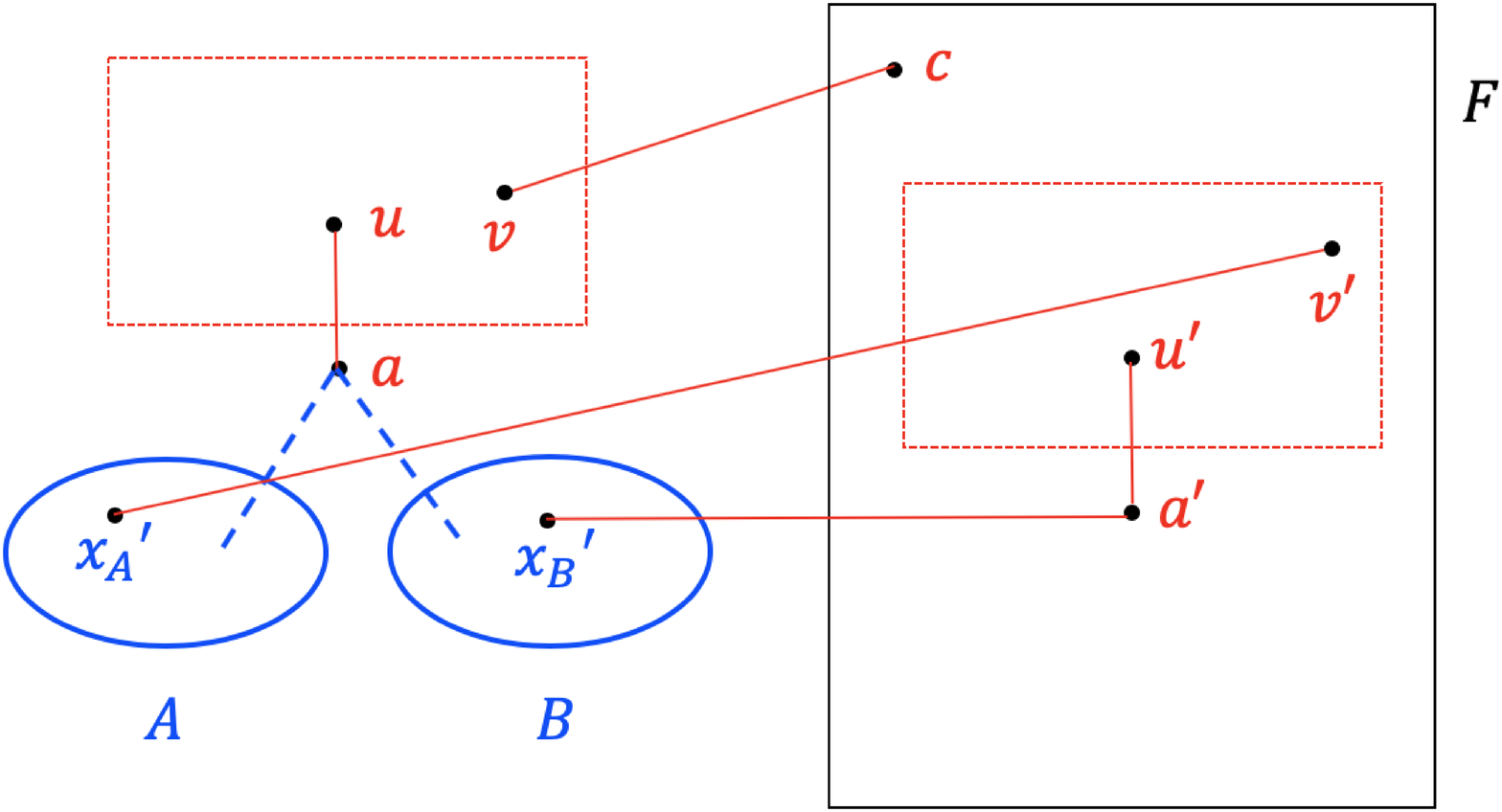}
    \caption{$F$ contains a red $T_n^{**}-\{b',c'\}$}
        \label{T5}
    \end{figure}

Case $2$. $F$ contains a blue $K_{m-1}$, denoted by $C$ (see Figure \ref{T6}).
To avoid a blue copy of $2K_m$, there exist $x_A\in A$ and $x_B\in B$ such that $ux_A$ and $vx_B$ are red.
Otherwise, $\{u\}\cup A$ and $\{a\}\cup B$ or $\{v\}\cup B$ and $\{a\}\cup A$ form a blue copy of $2K_m$, we are done.
Furthermore, there exists $x_C\in C$ such that $x_Ax_C$ is red.
Otherwise, $\{x_A\}\cup C$ and $\{a\}\cup B$ form a blue copy of $2K_m$, we are done.
Hence $T_n^{*}-\{b\}-\{a,c\}+\{x_A,x_B,x_C\}\subseteq K_N$ contains a red copy of $T_n^{**}$ (see Figure \ref{T6}). and we are done.
\begin{figure}[H]
\centering
    \includegraphics[height=5.5cm]{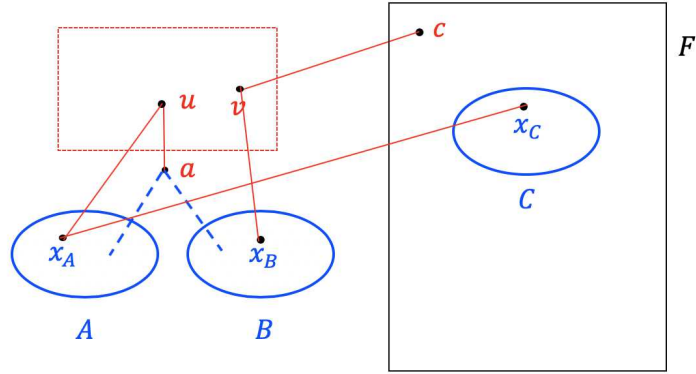}
    \caption{$F$ contains a blue $K_{m-1}$}
        \label{T6}
    \end{figure}
\end{proof}

\begin{lemma}\label{2}
Let $n\geq 3$ and $m\geq2$ be integers.
Assume that $R(T,2K_{m-1})\leq (n-1)(m-2)+2$ holds for any tree $T$ with order $n$.
Let $T_n^*$ be a tree with $n$ vertices.
%Let $d\geq2$ and $u\in V(T_n^*)$ with $N(u)=\{z_0,z_1,\dots,z_{d-1}\}$ satisfying that $z_i$ is a leaf of $T_n^*$ and $z_0$ is not a leaf for each $i\in[d-1]$. Let $b\in T_n^*-(\{u\}\cup N(u))$ be a leaf.
Let $T_n^{**}$ be obtained by Expanding $T_n^{*}$ at a vertex $u$ (see Figures \ref{T1} and \ref{T2}).
If $R(T_n^*,2K_m)\leq (n-1)(m-1)+2$, then $R(T_n^{**},2K_m)\leq (n-1)(m-1)+2$.
\begin{figure}[H]
\centering
\begin{minipage}{5cm}
\includegraphics[width=0.85\textwidth, height=0.8\textwidth]{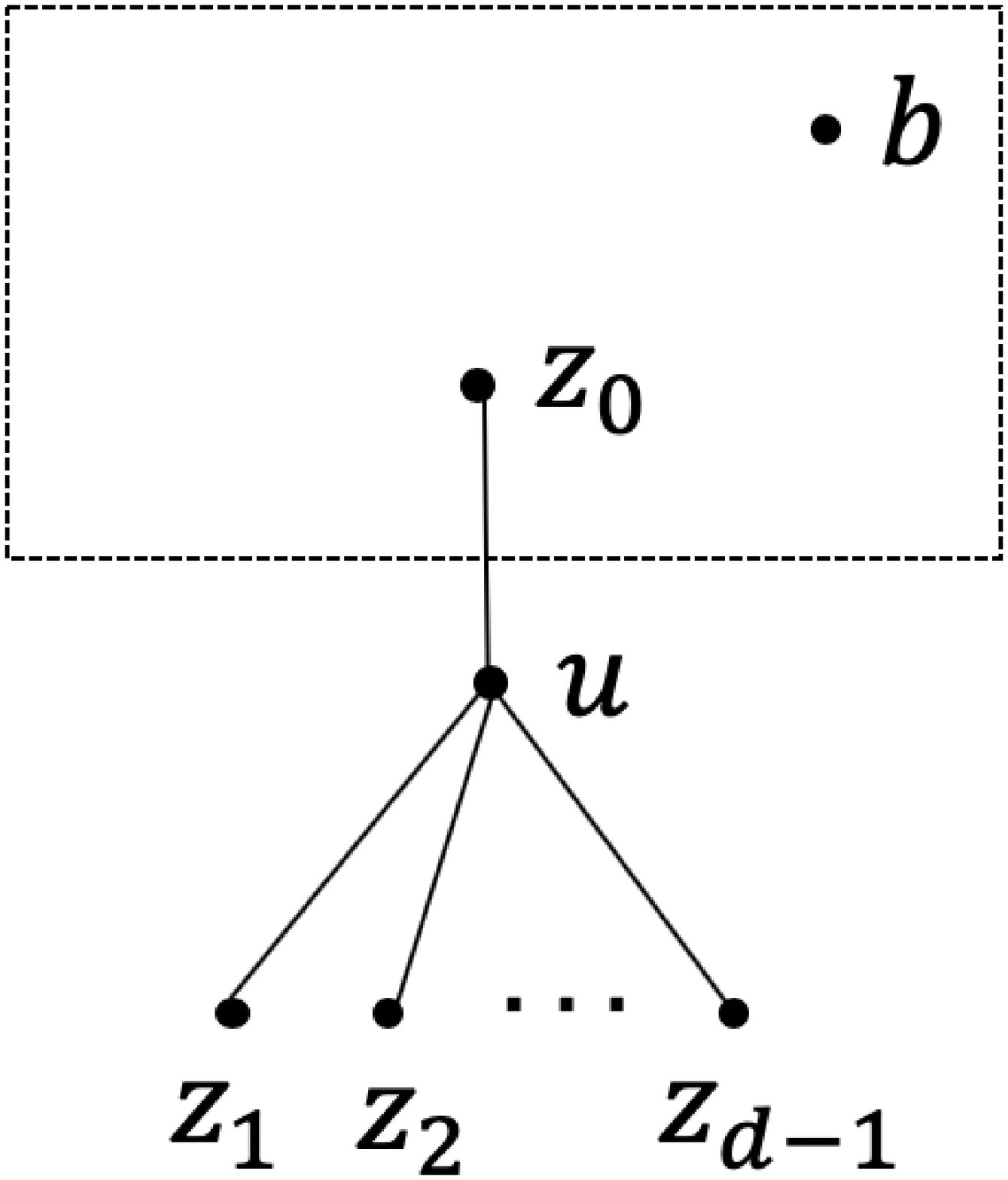}
\caption{$T_n^*$}
   \label{T1}
\end{minipage}
\begin{minipage}{5cm}
\includegraphics[width=0.87\textwidth, height=0.8\textwidth]{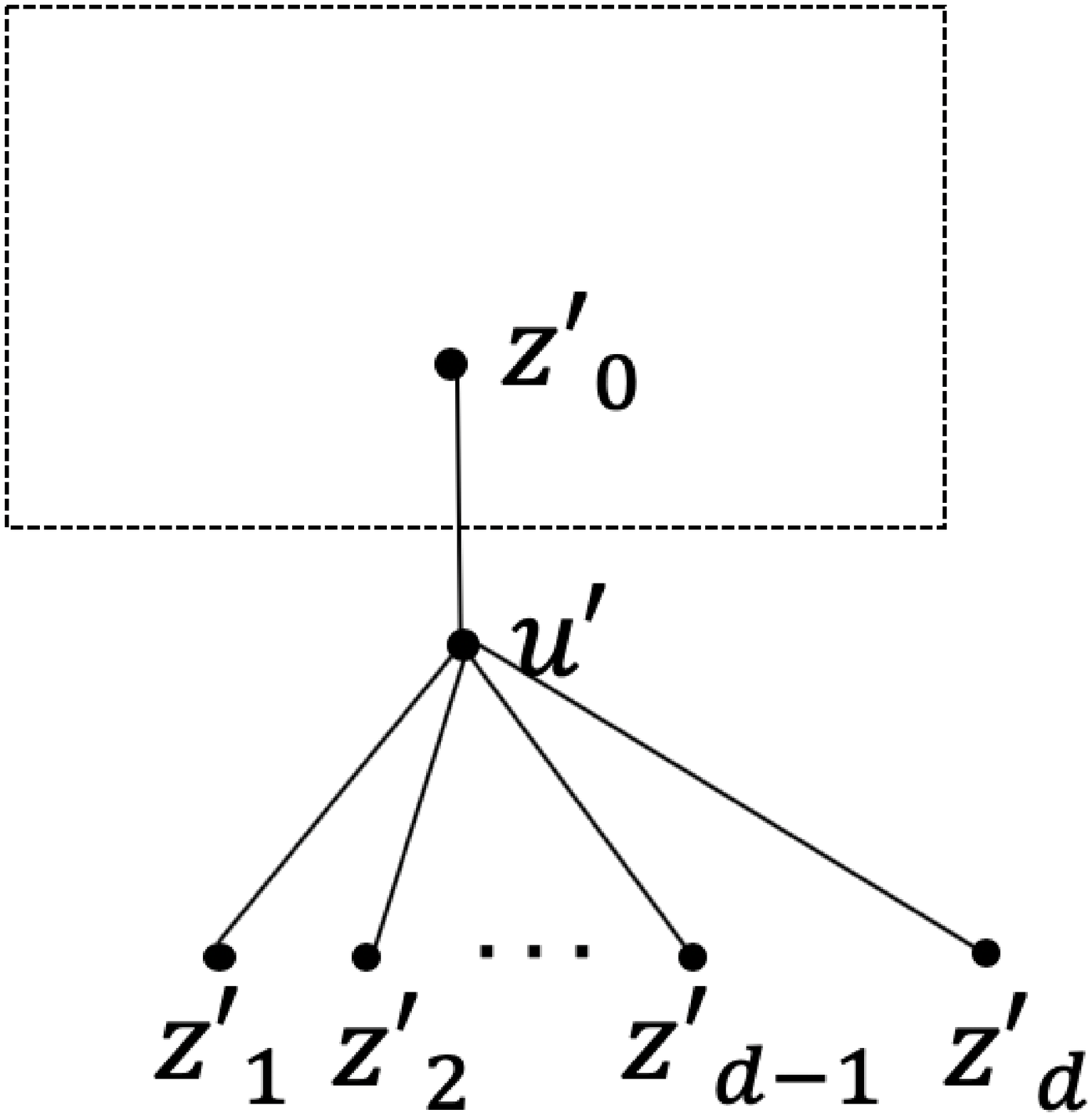}
\caption{$T_n^{**}$}
   \label{T2}
\end{minipage}
\end{figure}
\end{lemma}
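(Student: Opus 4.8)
The plan is to mirror the proof of Lemma \ref{1}, handling the new difficulty at the high-degree vertex $u$. Recall that the Expanding operation deletes a leaf $b\notin N[u]$ and adds a new leaf, say $w$, at $u$, so that $T_n^{**}=(T_n^*-b)+w$ and hence $D:=T_n^*-b=T_n^{**}-w$. First I would dispose of the degenerate case $n=3$ (where no Expanding is possible), set $N=(n-1)(m-1)+2$, and two-color $E(K_N)$. Since $R(T_n^*,2K_m)\le N$, either $K_N$ has a blue $2K_m$ (and we are done) or it has a red $T_n^*$, and in particular a red copy of $D=T_n^{**}-w$ on $n-1$ vertices. The remaining vertices number $N-(n-1)=(n-1)(m-2)+2$, so by the hypothesis $R(T_n^{**},2K_{m-1})\le(n-1)(m-2)+2$ the graph off $D$ contains a red $T_n^{**}$ (done) or a blue $2K_{m-1}$, whose two cliques I call $A$ and $B$.

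Next comes the dichotomy at $u$. If $u$ sends a red edge to any vertex off $D$, I attach that vertex as the missing leaf $w$ and obtain a red $T_n^{**}$; so I may assume $u$ is blue-complete to $V(K_N)\setminus V(D)$, in particular to $A\cup B$. Then $\{u\}\cup A$ and $\{u\}\cup B$ are blue $K_m$'s, and this forces a strong red condition: for each $Q\in\{A,B\}$ and each vertex $y\notin Q\cup\{u\}$ lying off the opposite clique, $y$ must send a red edge into $Q$, since otherwise $\{y\}\cup Q$ together with $\{u\}\cup Q'$ (where $Q'$ is the other clique) would be a blue $2K_m$. This uses only that $u$ is blue to $A\cup B$.

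With this red condition in hand I would run Chv\'atal's theorem as in Lemma \ref{1}. Let $T_{n-2}:=T_n^{**}-\{w,z_{d-1}\}=T_n^*-\{b,z_{d-1}\}$, obtained by deleting two leaves at $u$; here I use $d\ge2$, so $u$ has at least two leaf-neighbors in $T_n^{**}$. Put $F:=\big(V(K_N)\setminus(V(D)\cup A\cup B)\big)\cup\{t\}$ for a single vertex $t\in V(D)$ with $t\ne u$, chosen so that $|F|=(n-3)(m-2)+1=R(T_{n-2},K_{m-1})$. If $F$ carries a red $T_{n-2}$, its center $u'$ lies in $F$, hence $u'\ne u$ and $u'$ is off $A\cup B$; by the red condition $u'$ has red neighbors $x_A\in A$ and $x_B\in B$, which are disjoint from $V(T_{n-2})$, and attaching $x_A,x_B$ as the two missing leaves at $u'$ completes a red $T_n^{**}$. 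If instead $F$ carries a blue $K_{m-1}$, say $C$, then either $u$ sends a red edge into $C$ (attach it as $w$ and finish) or $A,B,C$ are three disjoint blue $K_{m-1}$'s with $u$ blue-complete to all of them, which sharpens the forcing and lets me rebuild a red $T_n^{**}$ (or read off a blue $2K_m$).

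The hard part is precisely the high degree of the attachment vertex. In the Stretching case the vertex being grown has degree two, so when it is blue-surrounded one simply relocates it onto a clique vertex along the two-edge red detour used in Lemma \ref{1}. Here $u$ has degree $d+1$ and its non-leaf neighbor $z_0$ roots an entire subtree, so $u$ cannot be relocated wholesale; the red condition supplies only a bounded number of red clique-edges at any single vertex, far fewer than the $d$ leaves $u$ must acquire. The device that rescues the argument is to delete \emph{two} of $u$'s leaves before embedding and to pin the image of $u$ onto a vertex with forced red edges into both cliques, so that only two leaves must be reattached. Making this work uniformly in $d$, and closing the blue-$K_{m-1}$ branch in which the Chv\'atal step returns no red $T_{n-2}$, is where the real care lies.
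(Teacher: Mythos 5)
Your setup coincides with the paper's: extract a red $T_n^*-b$ (so $u$ already carries $d-1$ of its $d$ leaves), find a blue $2K_{m-1}$ $A,B$ in the remainder, observe that $u$ must be blue-complete to $A\cup B$, and derive the forcing condition that every eligible vertex off $A\cup B\cup\{u\}$ sends a red edge into each of $A$ and $B$. Your first branch — re-embed $T_n^{**}$ minus two leaves of $u$ inside a set $F$ of size $(n-3)(m-2)+1$ via Chv\'atal and reattach two leaves from $A$ and $B$ — is exactly the paper's first sub-case and is correct.

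The gap is the branch where Chv\'atal returns a blue $K_{m-1}$ in $F$. You assert that having three blue cliques $A,B,C$ with $u$ blue-complete to all of them ``sharpens the forcing and lets me rebuild a red $T_n^{**}$,'' and then concede that closing this branch uniformly in $d$ ``is where the real care lies'' — but this branch is the heart of the lemma and a single extra clique does not suffice. In this branch $u$ must be relocated (it has no red edge off the tree), and the relocated vertex needs a red edge to $z_0$ plus $d$ red edges to $d$ pairwise disjoint fresh vertices; the forcing condition supplies only one guaranteed red neighbour per blue clique, so with only $A$, $B$ and $C$ available you can recover at most two or three leaves, which fails for $d\geq 3$. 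The paper's proof resolves this by iterating the Chv\'atal extraction: each time a blue $K_{m-1}$ appears, delete one more leaf of $u$ from the target tree and pass to $H_{i+1}=H_i-C_i+\{z_{i+1}\}$, continuing up to $d-1$ times (the count $|V(H_{d-1})|=(n-d-1)(m-2)+1\geq m-1$ uses $d\leq n-2$ from the definition of Expanding). Either some intermediate stage yields a red re-embedding of $T_n^{**}$ minus $k$ leaves of $u$, whose image of $u$ then collects its $k$ missing leaves, one from each of $A,B,C_1,\dots,C_{k-2}$; or one ends with $d+1$ disjoint blue $(m-1)$-cliques $A,B,C_1,\dots,C_{d-1}$, relocates $u$ to a vertex $x_{C_1}\in C_1$ that is forced to be red-adjacent to $z_0$, and draws one red leaf from each of the remaining $d$ cliques. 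Without this iteration and the final relocation through $z_0$, your argument does not close.
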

\begin{proof}
We just need to consider that $n\geq4$.
Since $T_n^*- N_1(u)+\{z_0\}$ is also a tree, then there exists at least a leaf $b$ other than $z_0$ in $T_n^*- N_1(u)$.
Note that $b$ is also a leaf of $T_n^*$.
Recall that $d\leq n-2$ in view of the definition of Expanding.
Let $N=(n-1)(m-1)+2$ and the edges of $K_N$ be colored by red or blue.
We will show that $K_N$ contains a red $T_n^{**}$ or a blue $2K_m$.
Since $R(T_n^*,2K_m)\leq (n-1)(m-1)+2$,
then $K_N$ contains a red $T_n^{*}$ or a blue $2K_m$.
We just need to consider that $K_N$ contains a red $T_n^{*}-\{b\}\subseteq T_n^{*}$,
whose order is $n-1$.
Since $N-(n-1)=(n-1)(m-2)+2$ and $R(T,2K_{m-1})\leq (n-1)(m-2)+2$ holds for any tree $T$ with order $n$,
then $K_N-(T_n^{*}-\{b\})$ contains a red $T_n^{**}$ or a blue $2K_{m-1}$.
We just need to consider that $K_N-(T_n^{*}-\{b\})$ contains a blue $2K_{m-1}$, denoted by $A$ and $B$ (see Figure \ref{T7}).
We note that the edges between $u$ and $A$, and $u$ and $B$ are all blue.
Otherwise there will be a red copy of $T_n^{**}$ and we are done.
\begin{figure}[H]
\centering
    \includegraphics[height=4cm]{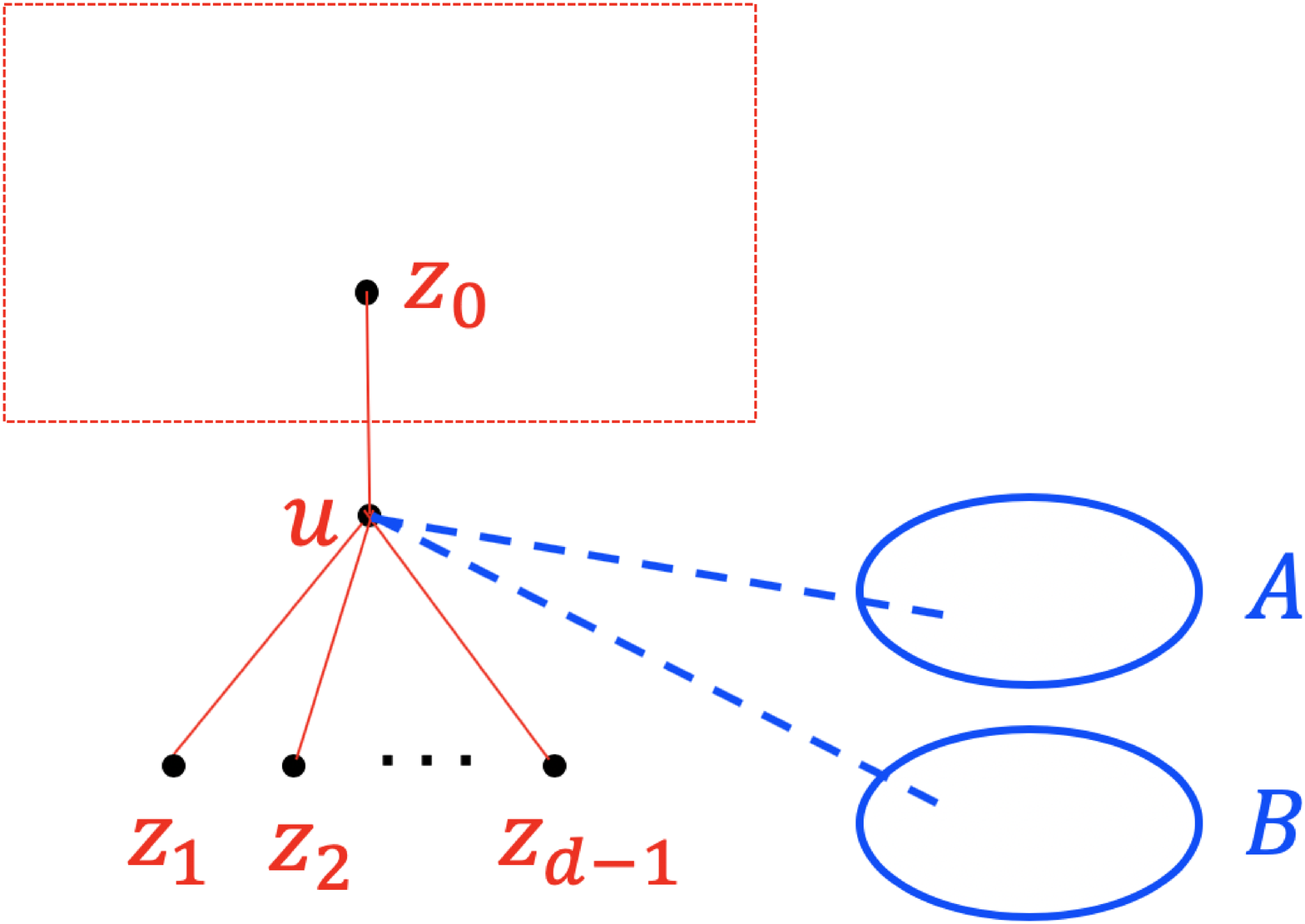}
    \caption{$K_N-(T_n^{*}-\{b\})$ contains a blue $2K_{m-1}$}
        \label{T7}
    \end{figure}

Let $H_1=K_N-(T_n^{*}-\{b\})-A-B+\{z_1\}$.
Then $|V(H_1)|=(n-3)(m-2)+1$.

If $d\geq3$, we perform the following procedure.
By Chv\'atal's theorem, $H_1$ contains a red $T_{n-2}=T_n^{**}-\{z_1',z_2'\}$ or a blue $K_{m-1}$.
%Note that it doesn't matter whether $z_1$ belongs to $T_n^{**}-\{z_1',z_2'\}$ (or $K_{m-1}$) or not.
Let us consider that $H_1$ contains a red $T_n^{**}-\{z_1',z_2'\}$ first, as in Figure \ref{T8}.
Note that there exist $x_A'\in A$ and $x_B'\in B$ such that $x_A'u'$ and $x_B'u'$ are red.
Otherwise, $\{u'\}\cup A$ and $\{u\}\cup B$ or $\{u'\}\cup B$ and $\{u\}\cup A$ form a blue copy of $2K_m$, we are done.
Hence $T_n^{**}-\{z_1',z_2'\}+\{x_A',x_B'\}$ contains a red copy of $T_n^{**}$ (see Figure \ref{T8}), and we are done.
\begin{figure}[H]
\centering
    \includegraphics[height=5cm]{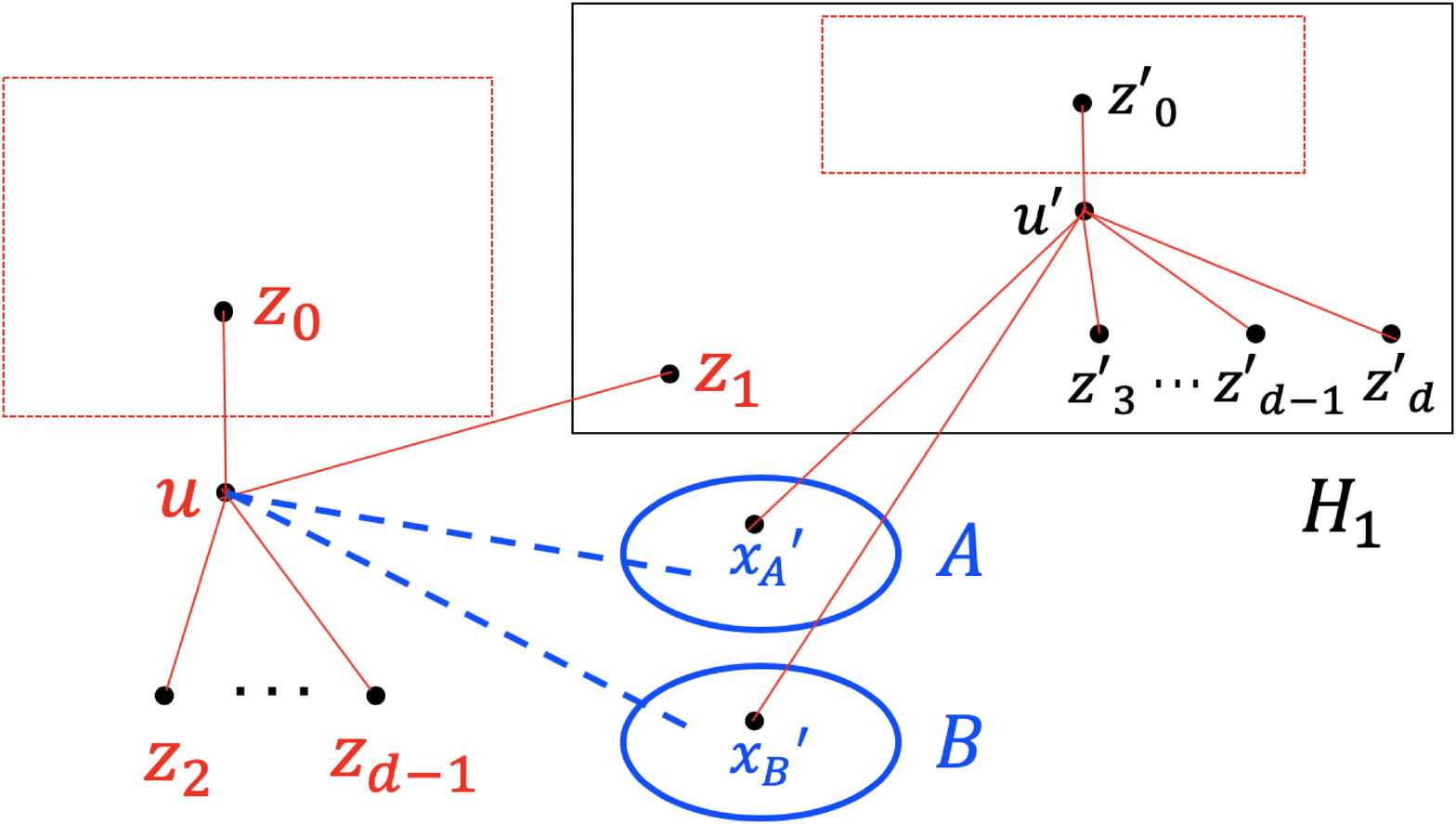}
    \caption{$H_1$ contains a red $T_n^{**}-\{z_1',z_2'\}$}
        \label{T8}
    \end{figure}
\noindent Now we consider that $H_1$ contains a blue $K_{m-1}$, denoted by $C_1$ (see Figure \ref{T9}).
Let $H_{2}=H_1-C_1+\{z_2\}$ (see Figure \ref{T9}).
Then $|V(H_2)|=(n-4)(m-2)+1$.
By Chv\'atal's theorem, $H_2$ contains a red $T_{n-3}=T_n^{**}-\{z_1',z_2',z_3'\}$ or a blue $K_{m-1}$.
%Note that it doesn't matter whether $z_2$ belongs to $T_n^{**}-\{z_1',z_2',z_3'\}$ (or $K_{m-1}$) or not.
Let us consider that $H_2$ contains a red $T_n^{**}-\{z_1',z_2',z_3'\}$ first, as in  Figure \ref{T9}.
Note that there exist $x_A'\in A$, $x_B'\in B$ and $x_{C_1}'\in C_1$ such that $x_A'u'$, $x_B'u'$ and $x_{C_1}'u'$ are red.
Otherwise, $\{u'\}\cup A$ and $\{u\}\cup B$ or $\{u'\}\cup B$ and $\{u\}\cup A$ or $\{u'\}\cup C_1$ and $\{u\}\cup A$ form a blue copy of $2K_m$, and we are done.
Hence $T_n^{**}-\{z_1',z_2',z_3'\}+\{x_A',x_B',x_{C_1}'\}$ contains a red copy of $T_n^{**}$ (see Figure \ref{T9}), and we are done.
\begin{figure}[H]
\centering
    \includegraphics[height=5cm]{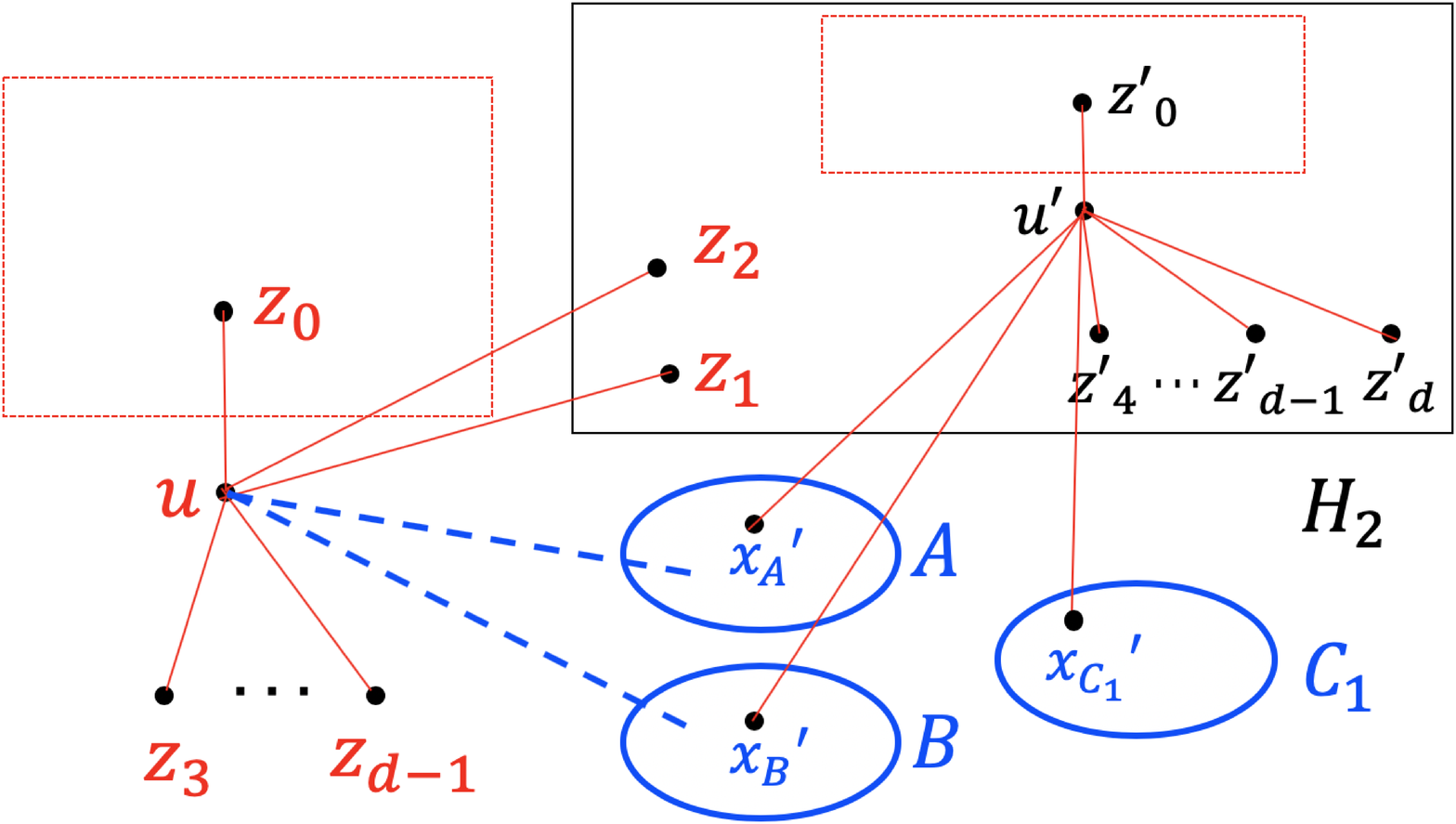}
    \caption{$H_2$ contains a red $T_n^{**}-\{z_1',z_2',z_3'\}$}
        \label{T9}
    \end{figure}
\noindent  We now consider that $H_2$ contains a blue $K_{m-1}$, denoted by $C_2$.
We take a similar procedure as before and continue the process at most $d-2$ times.
Either we are done or we obtain $d-2$ blue copies of $K_{m-1}$, denoted by $C_1,C_2,\dots,C_{d-2}$ (see Figure \ref{T10}) and $H_1,H_2,\dots,H_{d-2}$ with $H_{i+1}=H_i-C_i+\{z_{i+1}\}$ for $i=1,2,\dots,d-3$.
Let $H_{d-1}=H_{d-2}-C_{d-2}+\{z_{d-1}\}$.
If $d=2$, then $H_1=K_N-(T_n^{*}-\{b\})-A-B+\{z_1\}$, as in the beginning of the last paragraph.
Then $|V(H_{d-1})|=[n-2-(d-1)](m-2)+1=(n-d-1)(m-2)+1$.
Recall that $d\leq n-2$, then $|V(H_{d-1})|\geq m-1$.
By Chv\'atal's theorem, $H_{d-1}$ contains a red $T_{n-d}=T_n^{**}-\{z_1',z_2',\dots,z_d'\}$ or a blue $K_{m-1}$ (see Figure \ref{T10}).
%Note that it doesn't matter whether $z_{d-1}$ belongs to $T_n^{**}-\{z_1',z_2',\dots,z_d'\}$ (or $K_{m-1}$) or not.

Let us consider that $H_{d-1}$ contains a red $T_n^{**}-\{z_1',z_2',\dots,z_d'\}$ first.
Note that there exist $x_A'\in A,x_B'\in B,x_{C_1}'\in C_1,\dots,x_{C_{d-3}}'\in C_{d-3}$ and $x_{C_{d-2}}'\in C_{d-2}$, such that $x_A'u',x_B'u',x_{C_1}'u',\dots,x_{C_{d-3}}'u'$ and $x_{C_{d-2}}'u'$ are red.
Otherwise, %if the edges between $u'$ with $A$ or $B$ or $C_i$ are all blue, where $i\in[d-2]$.
$\{u'\}\cup A$ and $\{u\}\cup B$ or $\{u'\}\cup B$ and $\{u\}\cup A$ or $\{u'\}\cup C_i$ and $\{u\}\cup A$ form a blue copy of $2K_m$, where $i\in[d-2]$, and we are done.
Hence $T_n^{**}-\{z_1',z_2',\dots,z_d'\}+\{x_A',x_B',x_{C_1}',\dots,x_{C_{d-3}}',x_{C_{d-2}}'\}$ contains a red copy of $T_n^{**}$ (see Figure \ref{T10}), and we are done.
\begin{figure}[H]
\centering
    \includegraphics[height=5cm]{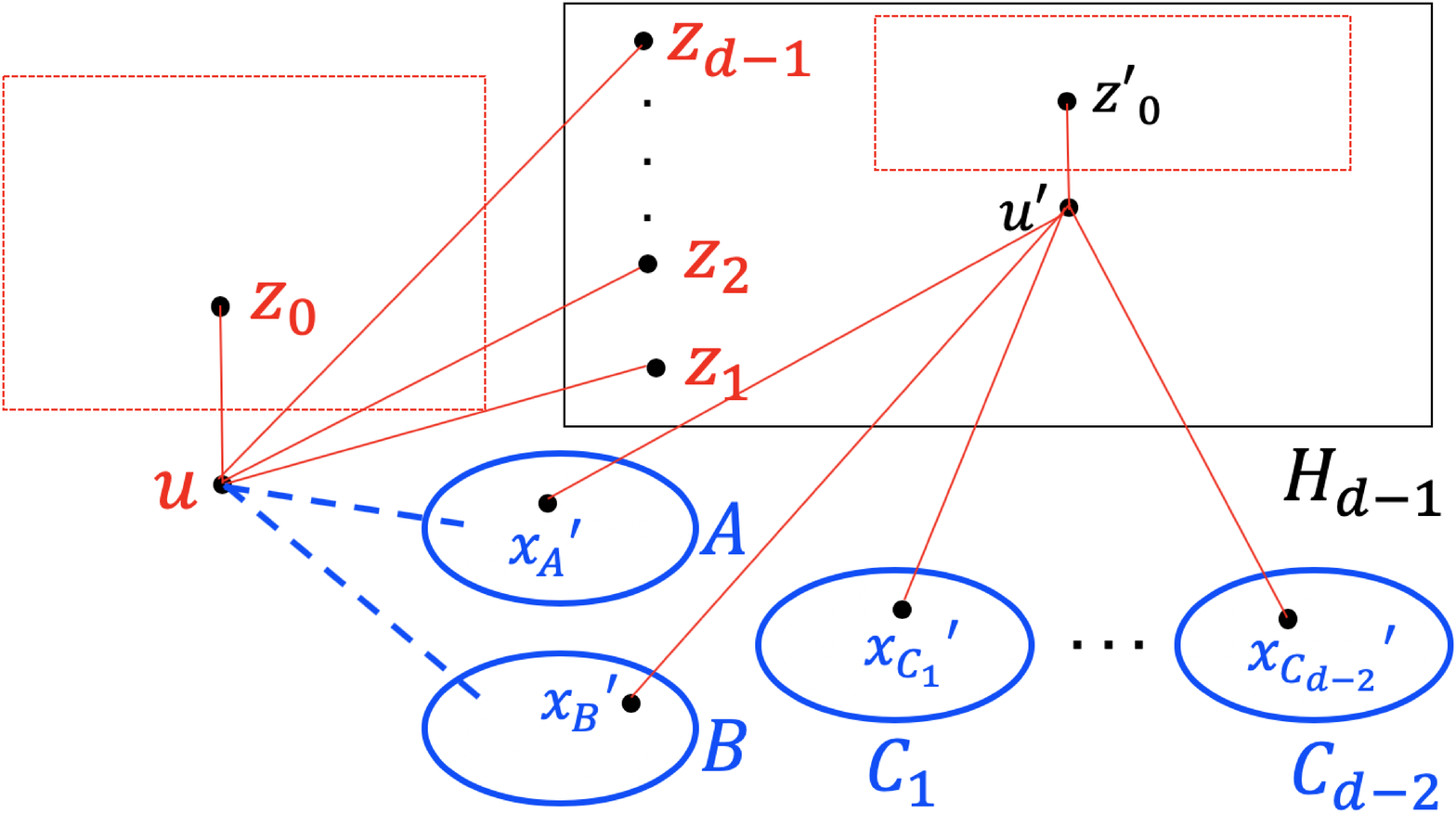}
    \caption{$H_{d-1}$ contains a red $T_n^{**}-\{z_1',z_2',\dots,z_d'\}$}
        \label{T10}
    \end{figure}
We now consider that $H_{d-1}$ contains a blue $K_{m-1}$, denoted by $C_{d-1}$ (see Figure \ref{T11}).
To avoid a blue copy of $2K_m$, there exists $x_{C_1}\in C_1$ such that $z_0x_{C_1}$ is red.
Otherwise, %if the edges between $z_0$ with $C_1$ are all blue, then
$\{z_0\}\cup C_1$ and $\{u\}\cup B$ form a blue copy of $2K_m$ and we are done.
Furthermore, there exist $x_A\in A,x_B\in B,x_{C_2}\in C_2,\dots,x_{C_{d-1}}\in C_{d-1}$, such that $x_{C_1}x_A,x_{C_1}x_B,x_{C_1}x_{C_2},\dots,x_{C_1}x_{C_{d-1}}$ are red.
Otherwise, %if the edges between $x_{C_1}$ and $A$ or $B$ or $C_i$, where $i\in[d-1]\setminus\{1\}$ are all blue, then
$\{x_{C_1}\}\cup A$ and $\{u\}\cup B$ or $\{x_{C_1}\}\cup B$ and $\{u\}\cup A$ or $\{x_{C_1}\}\cup C_i$ and $\{u\}\cup A$ form a blue copy of $2K_m$, where $i\in[d-1]\setminus\{1\}$.
Then $T_n^{*}-\{b\}-\{u\}-\{z_1,\dots,z_{d-1}\}+\{x_{C_1}\}+\{x_A,x_B,x_{C_2},\dots,x_{C_{d-1}}\}$ contains a red copy of $T_n^{**}$ (see Figure \ref{T11}), and we are done.
\begin{figure}[H]
\centering
    \includegraphics[height=5cm]{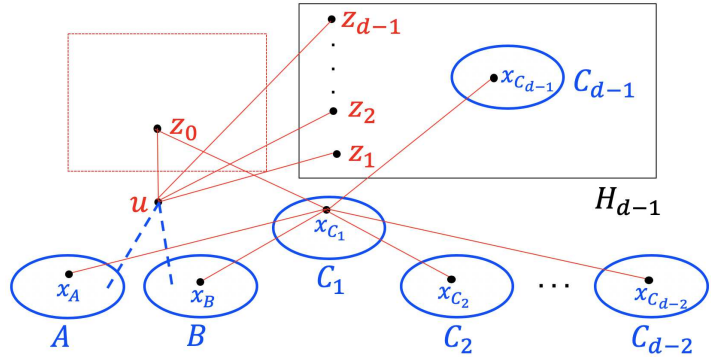}
    \caption{$T_n^{*}-\{b\}-\{u\}-\{z_1,\dots,z_{d-1}\}+\{x_{C_1},x_A,x_B,x_{C_2},\dots,x_{C_{d-1}}\}$ contains a red copy of $T_n^{**}$}
        \label{T11}
    \end{figure}
\end{proof}

Before giving the proof of Theorem \ref{Tn}, we show the following result given by  Sudarsana, Adiwijaya and Musdalifah \cite{SAM}.

\begin{lemma}\label{Pn}
Let $n\geq3$ and $m\geq2$ be positive integers, then $R(P_n,2K_m)=(n-1)(m-1)+2$.
\end{lemma}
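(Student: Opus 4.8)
The plan has two parts. For the lower bound I would simply invoke Burr's bound (Theorem~\ref{Burr}): one has $\chi(2K_m)=m$ and $s(2K_m)=2$, since in any proper $m$-colouring of $2K_m$ each of the two copies of $K_m$ uses all $m$ colours exactly once, so every colour class has size exactly $2$; as $v(P_n)=n\ge 2=s(2K_m)$, Theorem~\ref{Burr} gives $R(P_n,2K_m)\ge (n-1)(m-1)+2$. (Concretely this is witnessed by colouring $K_{(n-1)(m-1)+1}$ so that the red graph is $m-1$ disjoint copies of $K_{n-1}$ together with one isolated vertex, all remaining edges being blue.) The whole content is therefore the matching upper bound $R(P_n,2K_m)\le (n-1)(m-1)+2$, which I would prove by induction on $m$, using Chv\'atal's theorem $R(P_n,K_m)=(n-1)(m-1)+1$ \cite{C} as the basic tool.

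For the base case $m=2$ I would show that every red/blue colouring of $K_{n+1}$ with no red $P_n$ contains a blue $2K_2$. If the blue graph contained no two independent edges, then its edges would pairwise intersect, forcing the blue graph to be, apart from isolated vertices, a star or a triangle; removing the centre of the star, or one vertex of the triangle, leaves a red graph on $n$ vertices that is complete or complete-minus-an-edge, and hence contains a red $P_n$ — a contradiction. So the blue graph has two independent edges, i.e. a blue $2K_2$.

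For the inductive step I would assume $R(P_n,2K_{m-1})\le (n-1)(m-2)+2$ and take a $2$-colouring of $K_N$, $N=(n-1)(m-1)+2$, with no red $P_n$. Since $N\ge (n-1)(m-2)+2$, the induction hypothesis already produces a blue $2K_{m-1}$, i.e. two disjoint blue cliques $A$ and $B$ of size $m-1$. The heart of the argument is to promote these to a blue $2K_m$: I would hunt for a vertex blue-complete to $A$ and a second, distinct one blue-complete to $B$, since adjoining them to $A$ and $B$ gives two disjoint blue $K_m$'s. The structural input that makes this possible is that ``no red $P_n$'' forces the colouring to be rich in cliques: repeatedly applying Chv\'atal's theorem $R(P_n,K_{m-1})=(n-1)(m-2)+1$ to the still-unused vertices returns, at each stage, either a red $P_n$ (and we are done) or one more blue $K_{m-1}$, while any vertex failing to be blue-complete to $A$ (respectively $B$) must emit red edges that can be woven, together with a near-spanning red path, into a red $P_n$. (Such a long red path is available because any region free of blue $K_{m-1}$ has red independence number less than $m-1$, and a graph with independence number $\alpha$ contains a path on at least a $1/\alpha$ fraction of its vertices.)

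I expect the promotion step to be the genuine obstacle, for two reasons. First, the vertex budget is razor-thin: $N$ exceeds Chv\'atal's threshold $R(P_n,K_m)$ by a single vertex, so one cannot afford simply to delete $A$ and $B$ and restart — the deleted clique vertices and the red-path vertices must be reused, and keeping an exact account of how many vertices remain available for the next application of Chv\'atal's theorem is delicate. Second, the obstruction analysis — which endpoint of a near-spanning red path receives a red edge from a clique vertex, and how the various blue $(m-1)$-cliques found along the way combine with $A$ and $B$ into a blue $2K_m$ — requires a careful case split. This is precisely the extension-and-merging mechanism carried out in full for general trees in the proofs of Lemmas~\ref{1} and~\ref{2}, and I would model the argument on those two cases.
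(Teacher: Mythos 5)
Your lower bound and base case are fine, but the inductive step has a genuine gap that you yourself flag: the ``promotion'' of the blue $2K_{m-1}$ to a blue $2K_m$ is never actually carried out. The difficulty comes from the order in which you extract structures. By finding the two blue $(m-1)$-cliques $A$ and $B$ first, you have no control over where a long red path sits relative to them, and your proposed repair --- repeatedly invoking Chv\'atal to harvest more blue $K_{m-1}$'s, plus a Gallai--Milgram-type bound guaranteeing a path on a $1/\alpha$ fraction of the vertices --- neither produces a red path of length $n-1$ with its endpoints outside $A\cup B$ nor explains how the harvested cliques merge into a $2K_m$. As written, the argument does not close.

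The paper resolves exactly this point by reversing the order. First it extracts a red $P_{n-1}$: apply Chv\'atal's $R(P_n,K_m)=(n-1)(m-1)+1\le N$ to get a red $P_n$ (done) or a blue $K_m$, and then apply $R(P_{n-1},K_m)=(n-2)(m-1)+1=N-m$ to the remaining vertices to get either a second blue $K_m$ (done) or a red $P_{n-1}$, call it $P$ with endpoints $v_1,v_2$. Only then does it apply the induction hypothesis to the $(n-1)(m-2)+2$ vertices of $K_N-V(P)$, obtaining a blue $2K_{m-1}$ \emph{disjoint from $P$}. Now every vertex of $A\cup B$ lies off the path, so any red edge from $v_1$ or $v_2$ into $A\cup B$ extends $P$ to a red $P_n$; if all such edges are blue, then $\{v_1\}\cup A$ and $\{v_2\}\cup B$ form a blue $2K_m$. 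This two-line dichotomy is precisely the promotion step you were missing, and it requires no vertex-budget bookkeeping or case analysis on path endpoints. If you restructure your induction so that the red $P_{n-1}$ is found before, and disjointly from, the blue $2K_{m-1}$, your proof goes through.
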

\begin{proof}
We just need to prove the upper bound since the lower bound follows  by Theorem  \ref{Burr}  directly. 
Let $N=(n-1)(m-1)+2$ and the edges of $K_N$ be colored by red or blue.
We will show that $K_N$ contains a red $P_n$ or a blue $2K_m$.  
We first claim that there is a red $P_{n-1}$ contained in $K_N$. 
By the result given by Chv\'atal \cite{C}  that $R(T_n,K_m)=(n-1)(m-1)+1$, $K_N$ contains a red $P_n$ or a blue $K_m$.  
We just need to consider that $K_N$ contains a blue $K_m$ (denote it by $K$), otherwise we are done. 
Since $N-m=(n-2)(m-1)+1$ and we continue to apply the result of Chv\'atal \cite{C}  that $R(P_{n-1},K_m)=(n-2)(m-1)+1$,  then the graph induced on $V(K_N)-V(K)$ contains either a blue $K_m$ and we are done, or a red $P_{n-1}$. 
Hence, there is a red $P_{n-1}$ contained in $K_N$ and we denote it  by $P$. Let us label the end vertices of $P$ as $v_1$ and $v_2$.   
Now we use induction on $m$ to complete the proof. 
By a result of Chv\'atal and Harary \cite{CH},  the conclusion holds for $m=2$.  
Assume that the conclusion holds for $m-1$ and we will show that it holds for $m$. 
Let $H$ be the graph induced on $V(K_N)-V(P)$, then $v(H)=(n-1)(m-2)+2$. 
Since  the conclusion holds for $m-1$, then we obtain either a red $P_{n}$ and we are done, or a blue $2K_{m-1}$ in $H$. 
We denote the blue $2K_{m-1}$ by $A$. 
Now we consider the edges between $\{v_1,v_2\}$ and $V(A)$.  
To avoid a red $P_n$, all edges between $\{v_1,v_2\}$ and $V(A)$ are blue, which will imply a blue $2K_m$ and we are done. 
\end{proof}

{\bf Proof of Theorem \ref{Tn}} \ By  Lemma \ref{Pn}, we just need to consider that $n\geq4$ and $T_n$ is not a path.
The lower bound holds by Theorem \ref{Burr}.
Now we use induction on $m$ to prove the upper bound.
By %Theorem \ref{basecase}, 
a result of Chv\'atal and Harary \cite{CH},  the conclusion holds for the case that $m=2$.
Now we assume that the conclusion holds for $m-1$, i.e., $R(T,2K_{m-1})\leq (n-1)(m-2)+2$ holds for any tree $T$ with $n$ vertices.
Under this assumption, Lemma \ref{1} and Lemma \ref{2} guarantee that the property  “$2K_m$-good” is Stretching-preserving  and Expanding-preserving.
Applying Lemma \ref{Pn} that $P_n$ is $2K_m$-good and Corollary \ref{111}, we obtain that $R(T_n,2K_{m})\leq (n-1)(m-1)+2$.
\qed

\subsection{Obtain $T_n$ from $P_n$ by Stretching and Expanding} 
In this subsection, we give the proof of Proposition \ref{remark}. 
\

{\bf Proof of Proposition \ref{remark}} \ 
For any fixed tree $T_n$ on $n$ vertices, let $P$ be a longest path in $T_n$.
Let us label the end vertices of $P$ as $v$ and $w$, and label the vertices with degree greater than $2$ in the path as $u_1,u_2,\dots,u_l$.
Assume that the distance between $v$ and $u_1$, denoted by $d(v,u_1)$, is $t_0+1$, and $d(u_i,u_{i+1})=t_i+1$ for each $i\in[l-1]$ and $d(u_l,w)=t_l+1$. 
Let $d_i=d(u_i)-2$. 
We label the vertices not in $P$ and connecting to $u_i$ as $y_1^i,y_2^i,\dots,y_{d_i}^i$.
For each $i\in[l]$, let $B^i$ be a subset of $V(T_n)$ such that the subgraph induced on $\{u_i\}\cup B^i$ is a subtree of $T_n$ and this subtree just contain one vertex of $P$, i.e.,  $u_i$.  
Clearly, $V(T_n)=V(P)\cup B^1\cup\dots\cup B^l$ as in Figure \ref{T12}.
\begin{figure}[H]
\centering
    \includegraphics[height=4.5cm]{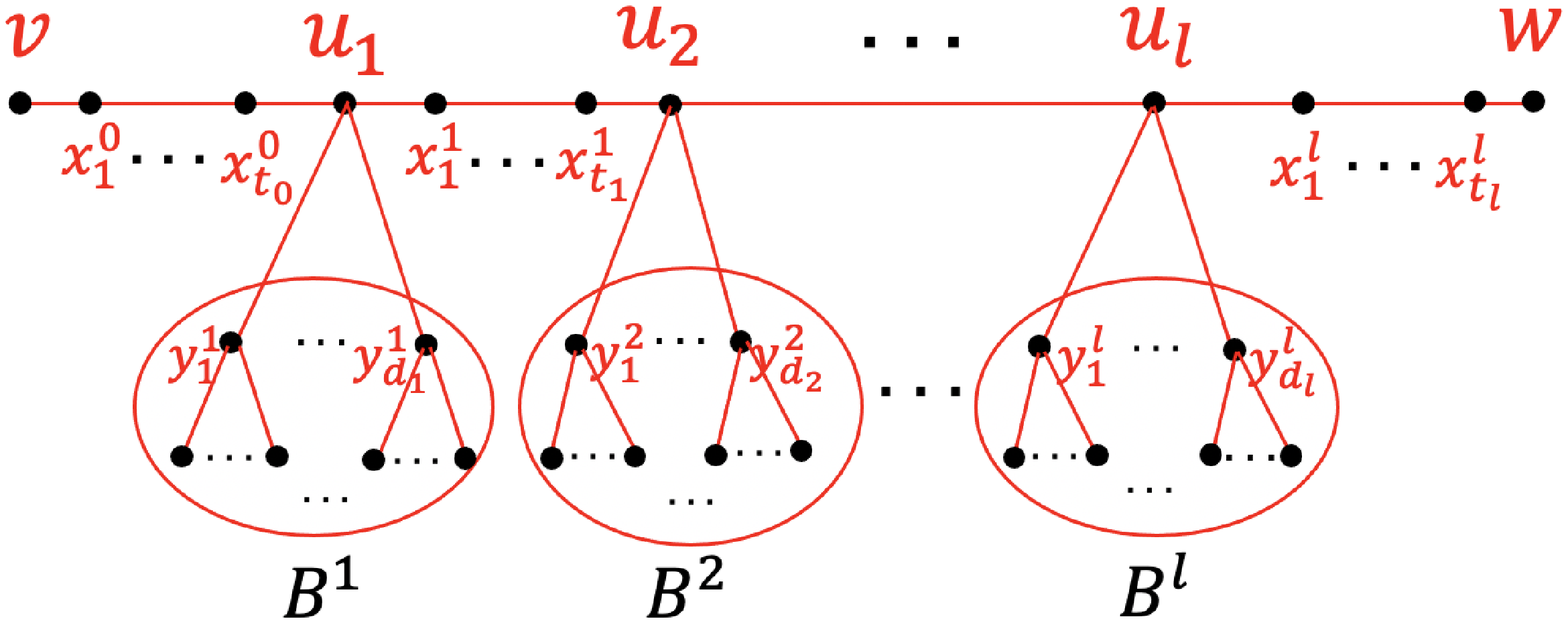}
     \caption{$T_n$}
     \label{T12}
    \end{figure}
    It's  easy to see that $P_n$ can be obtained from  $T_n$ by applying Stretching multiple times.   
     Actually, for any $i\in[l]$, the subgraph induced on $\{u_i\}\cup B^i$ is a subtree of $T_n$, thus there is at least one leaf in every such subtree. 
   Firstly, we can Stretching $T_n$ at the leaf $v$ by deleting a leaf $v'\in B^1$, then we lengthen path $P$ with   the end vertices as $v'$ and $w$. 
   We continue  Stretching this tree at the leaf $v'$ by deleting a leaf in $B^1-\{v'\}$ (if $B^1-\{v'\}=\emptyset$, then we deleting a leaf in $B^2$). 
   Continue this progress, we can   obtain  $P_n$ from  $T_n$ by applying Stretching multiple times (clearly the number of operations is equal to $|B^1|+\dots+|B^l|$).   
     
     Now we prove  that $T_n$ can be obtained from  $P_n$ by applying Stretching and Expanding multiple times.  We label the vertices of $P_n$ as in Figure \ref{T14} (if $t_1=0$, then $x_1^1$ is $u_2$). 
     We will delete all vertices in $\{b_{n-t_0-3},\dots,b_1\}$ in turn  and add all corresponding vertices to obtain $T_n$. 
     \begin{figure}[H]
\centering
    \includegraphics[height=1.7cm]{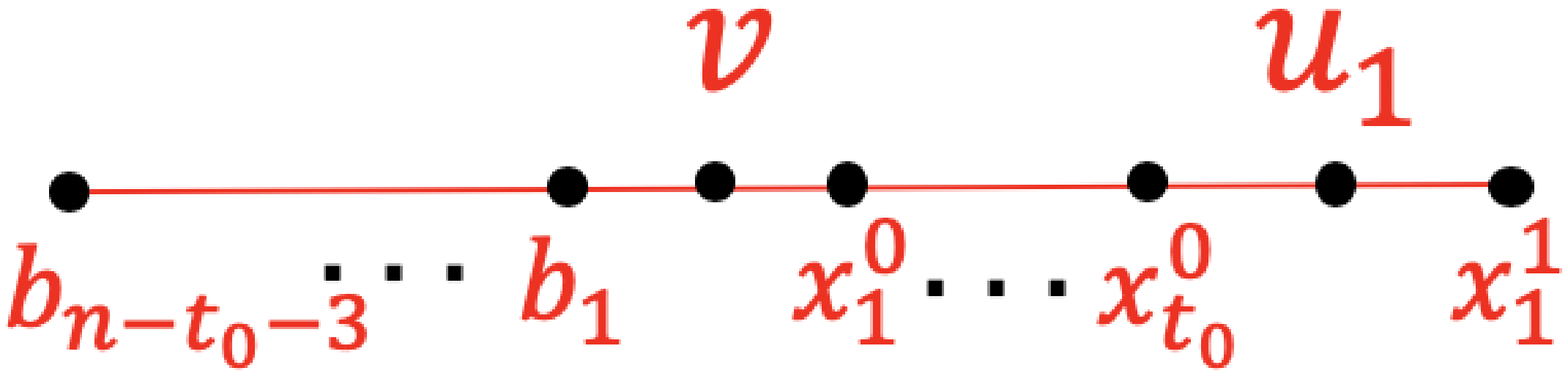}
    \caption{$P_n$}
        \label{T14}
    \end{figure}
     Our general operation is divided into the following steps. 
     
     Step $1$. By applying Stretching and Expanding multiple times, add all vertices in $B^1$.   
     We first relabel all vertices of $B^1$ in $T_n$. We relabel $y_{i}^1=i$ for all $i\in[d_1]$ and relabel all vertices in $B^1-\{1,2\dots,d_1\}$ as $d_1+1,d_1+2,\dots,|B^1|$ so that every $j$ is connected to exactly one of the vertices $1,2,\dots,j-1$, where $j\in[|B^1|]\setminus[d_1]$.  
       We define $$N_k(u_1)=\{j\in B^1:d(j,u_1)=k\}.$$ 
Denote $|N_k(u_1)|=N_k$ for all $k\geq 1$. 
   Clearly, $N_1(u_1)=\{1,2,\dots,d_1\}$, $N_2(u_1)=\{d_1+1,\dots,d_1+N_2\},\dots$.  
   We first show that how to delete $d_1$ vertices in $P_n$ and add all corresponding  vertices in $N_1(u_1)$.    
  We   apply Expanding $P_n$ at vertex $u_{1}$ $d_1$ times by deleting $b_{n-t_0-3},b_{n-t_0-4},\dots$ and $b_{n-t_0-(d_1+2)}$, and add $1,2,\dots$ and $d_1$ connecting to $u_{1}$ in turn, then we obtain $T'_n$ as in  Figure \ref{T15}.
\begin{figure}[H]
\centering
    \includegraphics[height=3.5cm]{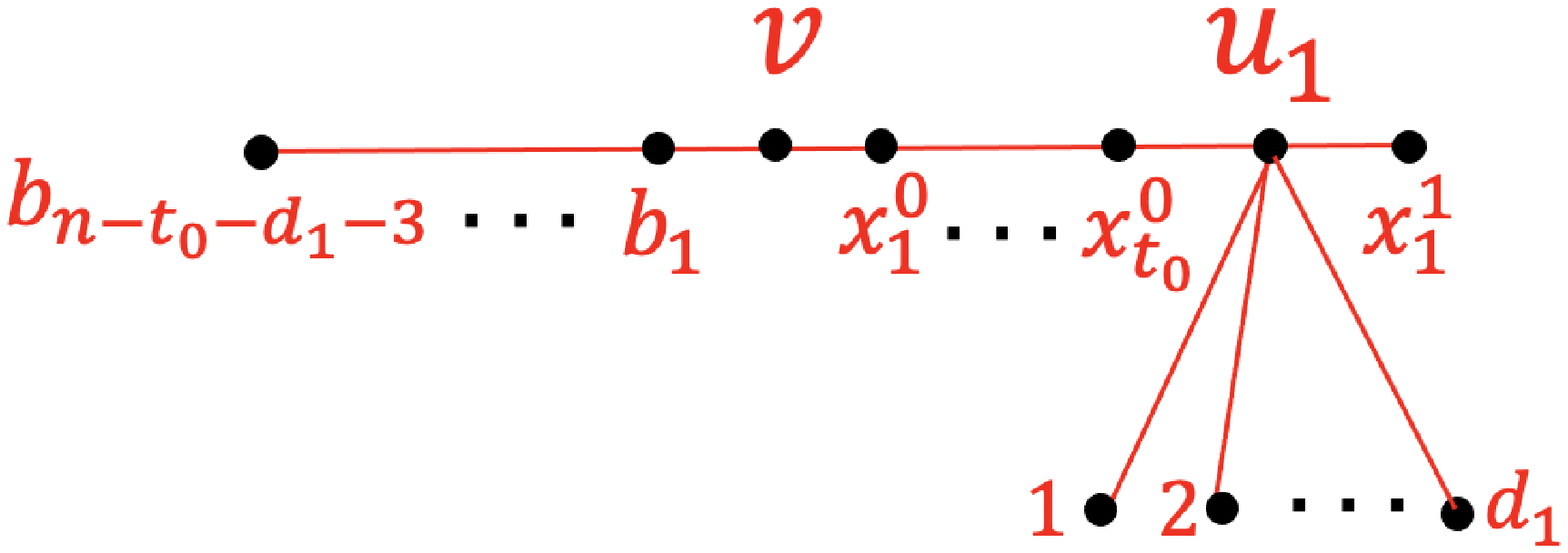}
    \caption{$T'_n$}
        \label{T15}
    \end{figure}
    \noindent       If for each $i\in[d_1]$, $i$ is a leaf of $T_n$, then we are done and go to step $2$. 
   Now we assume that there is $i$  not being  a leaf of $T_n$  for some $i\in[d_1]$. 
%We continue to label the  rest of the vertices in $B^1-\{1,2,\dots,i\}$ as $i,i+1,\dots,|B^1|$ so that for any $j$, $j$  is connected to exactly one vertex in $\{1,2,\dots,j-1\}$, where $j\in[|B^1|]$.  
We assume that we have added all vertices in $N_k(u_1)$, and show that how to add all vertices in $N_{k+1}(u_1)$, where $k\geq1$.   
%Recall the labelling, $N_k(u_1)=\{d_1+N_2+\dots N_{k-1}+1,\dots,d_1+N_2+\dots N_{k}\}$. 
For $v\in N_k(u_1)$ so that the degree of $v$ in $T_n$ is more than $1$, 
we perform Stretching the tree at every such $v$ (in turn) by deleting an end point $b_\ell\in P_n$, where $\ell\in[n-t_0-d_1-3]$. 
On this foundation, for $v\in N_k(u_1)$ so that the degree of $v$ in $T_n$ is more than $2$, 
we perform Expanding the tree at every such $v$ $d_{T_n}(v)-2$ times (in turn) by deleting an end point $b_\ell\in P_n$ in turn, where $\ell\in[n-t_0-d_1-3]$. 
Hence we add all vertices in $N_{k+1}(u_1)$, where $k\geq1$.   

  Step $2$. By applying Stretching multiple times, add all vertices of $\{x^1_2,\dots,x^1_{t_1},u_2,x^2_1\}$.   
  We   first apply Stretching the tree obtained  by step $1$ at the end vertex $x^1_i\in  P_n$ (in turn) by deleting $b_{n-t_0-|B^1|-i-2}$, and add $x^1_{i+1}$ connecting to $x^1_i$ in turn, where $i\in[t_1-1]$.  
  Next we apply Stretching this tree at the end vertex $x^1_{t_1}$ by deleting $b_{n-t_0-|B^1|-t_1-2}$, and add $u_2$ connecting to $x^1_{t_1}$. 
  At last we apply Stretching this tree at the end vertex $u_2$ by deleting $b_{n-t_0-|B^1|-t_1-3}$, and add $x^2_1$ connecting to $u_2$. Hence, we obtain $T''_n$ as in  Figure \ref{T16}.
\begin{figure}[H]
\centering
    \includegraphics[height=4cm]{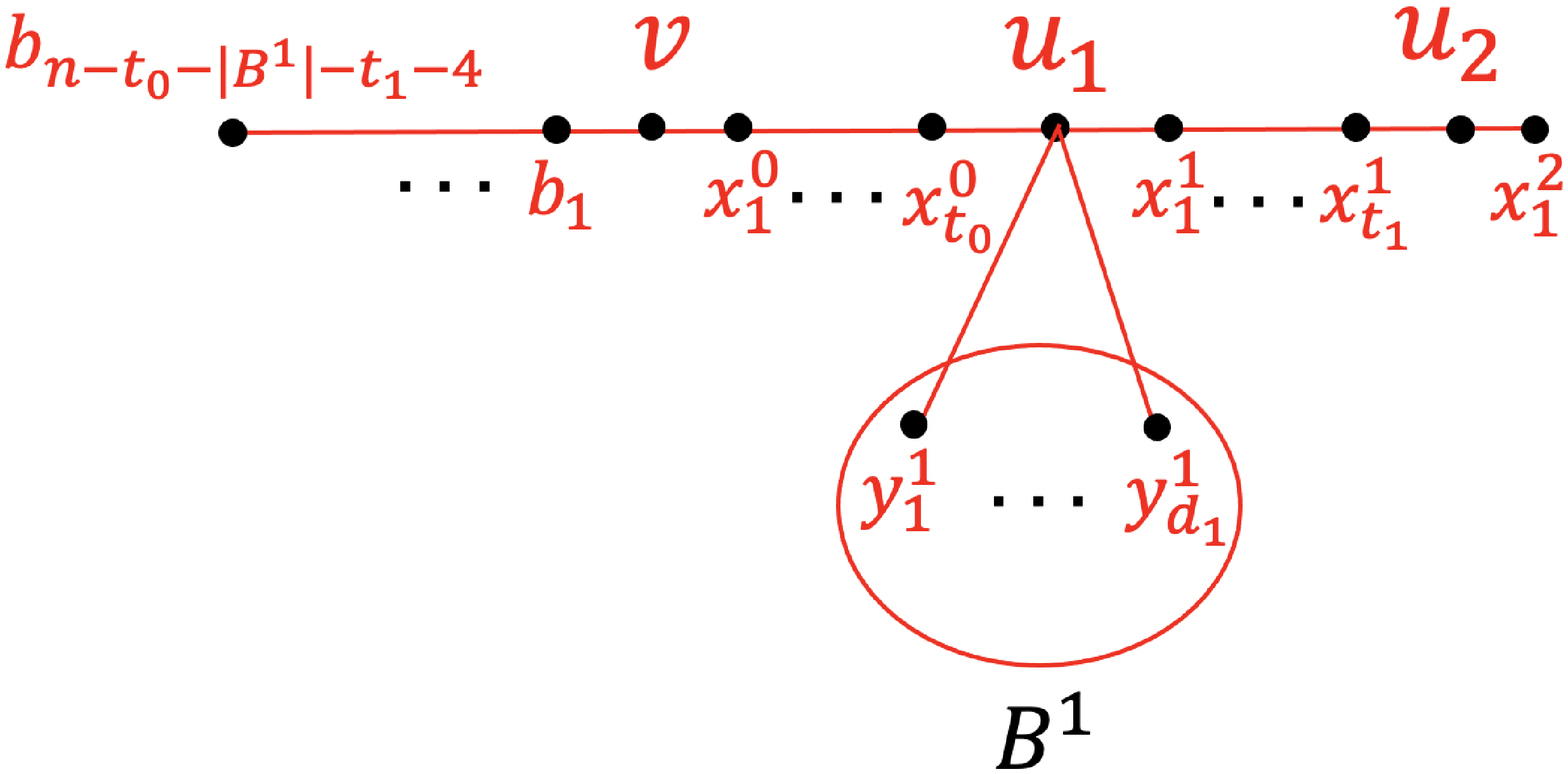}
    \caption{$T''_n$}
        \label{T16}
    \end{figure}
On this  foundation, we continue to attach $B_i$ to $u_i$ by performing Stretchings and Expandings similar to step $1$ and $2$ for each $i\in\{2,\dots,2l-1\}$  until we obtain $T_n$.

\qed

\section{Tree is $K_m\cup K_l$-good}
Firstly, we prove the following theorem which implies that $T_n$ is $K_m\cup K_l$-good for $n\geq 3$ and $m>l\geq2$.

\begin{theo}\label{Tnn}
Let $G$ be a $2K_{m-1}$-good  graph with $n\geq3$ vertices. 
If there exist two vertices $u$ and $v$ with degree one in $G$ such that  $G-\{u,v\}$ is $K_m$-good, 
then   $G$ is $K_m\cup K_l$-good, where $m>l\geq2$.
\end{theo}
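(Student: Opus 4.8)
The plan is to prove the upper bound $R(G,K_m\cup K_l)\le (n-1)(m-1)+1$; the matching lower bound is immediate from Theorem~\ref{Burr}. Indeed $\chi(K_m\cup K_l)=m$, and since $m>l$ every proper $m$-colouring must spend some colour only on the $K_m$, so $s(K_m\cup K_l)=1$; Burr's bound then reads $R(G,K_m\cup K_l)\ge (n-1)(m-1)+1$, using $v(G)=n\ge 1=s(K_m\cup K_l)$. Set $N=(n-1)(m-1)+1$ and colour the edges of $K_N$ red and blue; the goal is to produce a red $G$ or a blue $K_m\cup K_l$.

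The engine of the argument is two numerical identities. First, since $G$ is $2K_{m-1}$-good with $\chi(2K_{m-1})=m-1$ and $s(2K_{m-1})=2$, we have $R(G,2K_{m-1})=(n-1)(m-2)+2\le N$ (the inequality is just $1\le n-1$), so $K_N$ contains a red $G$, in which case we are done, or a blue $2K_{m-1}$ whose two cliques I call $A$ and $B$. Second, writing $W=V(K_N)\setminus(A\cup B)$, one computes $|W|=N-2(m-1)=(n-3)(m-1)+1$, which is \emph{exactly} $R(G-\{u,v\},K_m)$, because $G-\{u,v\}$ is $K_m$-good on $n-2$ vertices and $s(K_m)=1$. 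Hence the complete graph induced on $W$ contains a red copy $G_0$ of $G-\{u,v\}$ or a blue $K_m$.

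From here I would split on the outcome in $W$. If $W$ yields a blue $K_m$, then since $l\le m-1$ the clique $A$ contains a blue $K_l$; as $A$ is disjoint from $W$, these two blue cliques are vertex-disjoint and form a blue $K_m\cup K_l$. If instead $W$ yields the red $G_0$, let $p_0,q_0\in G_0$ be the images of the neighbours of the two leaves $u,v$ (these neighbours lie in $G-\{u,v\}$, since $G$ is connected with $n\ge3$, so $u,v$ are non-adjacent). I then reattach $u$ and $v$ using one vertex of $A$ and one of $B$. The crucial observation is the dual role of the two cliques: if every edge from $p_0$ to $A$ were blue, then $A\cup\{p_0\}$ would be a blue $K_m$ and $B$ would contribute a disjoint blue $K_l$, finishing; so there is a red edge $a p_0$ with $a\in A$, and symmetrically a red edge $b q_0$ with $b\in B$ (or we finish via the blue $K_m$ on $B\cup\{q_0\}$ together with a blue $K_l$ inside $A$). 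Since $A\cap B=\emptyset$ and $A\cup B$ is disjoint from $W\supseteq G_0$, the vertices $a,b$ are distinct and lie outside $G_0$, so $G_0$ together with $a$ attached at $p_0$ and $b$ attached at $q_0$ is a red $G$.

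The step I expect to be most delicate is not any single estimate but making the case analysis airtight so that every branch closes. The subtle point is that the two blue cliques $A,B$ must serve simultaneously as the reservoir for the ``free'' blue $K_l$ and as the attachment points for the two deleted leaves, and this is precisely where both hypotheses are consumed: $m>l$ guarantees $K_{m-1}\supseteq K_l$, while the $K_m$-goodness of $G-\{u,v\}$ makes $|W|$ meet the Ramsey threshold exactly. I would also verify the degenerate configurations, such as $n=3$ or the case $p_0=q_0$ when $u$ and $v$ share a neighbour; the argument above already accommodates them, since $a$ and $b$ are chosen from different cliques and are therefore automatically distinct.
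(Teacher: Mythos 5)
Your proposal is correct and follows essentially the same route as the paper's proof: use $2K_{m-1}$-goodness of $G$ to extract the blue cliques $A,B$, apply $K_m$-goodness of $G-\{u,v\}$ to the exactly-sized remainder $W$, and then either reattach the two leaves via red edges into $A$ and $B$ or close with a blue $K_m\cup K_l$ using $l\le m-1$. Your write-up is in fact slightly more explicit than the paper's about the blue-$K_m$-in-$W$ branch and the degenerate case $u'=v'$, but the argument is the same.
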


\begin{proof}
By Theorem \ref{Burr}, we just need to prove the upper bound  that $R(G,K_m\cup K_l)\leq (n-1)(m-1)+1$.  
Let $N=(n-1)(m-1)+1$ and the edges of $K_N$ be colored by red or blue. 
We will show that $K_N$ contains a red $G$ or a blue $K_m\cup K_{l}$. 
Since  $R(G,2K_{m-1})=(n-1)(m-2)+2\leq N$, hence $K_N$ contains a red $G$ or a blue $2K_{m-1}$.  
We just need to consider that $K_N$ contains a  blue $2K_{m-1}$. Denote the vertex sets of these two disjoint copies of blue $K_{m-1}$ by $A$ and $B$. 
Let $F=K_N-A-B$, then  $|V(F)|=(n-3)(m-1)+1$. 
Since $R(G-\{u,v\},K_m)=(n-3)(m-1)+1$, then we just need to consider that $F$ contains a red $G-\{u,v\}$. Let $u'$ and $v'$ be  the vertices connecting with $u$ and $v$ in $G$, respectively (It's possible that $v'=u'$). 
 If the edges between $u'$ and $A$ or $v'$ and $B$ are all blue, then we have a blue  $K_m\cup K_{m-1}$. 
 Otherwise,  there is at least  one red edge  between $u'$ and $A$, and at least one red edge between $v'$ and $B$, so there is a red $G$ and we are done. 
\end{proof}

Taking $G$ to be $T_n$ in Theorem \ref{Tnn}, and applying Theorems \ref{Tn} and \ref{Tnn}, we obtain the following result.
\begin{coro}\label{coro}
$T_n$ is $K_m\cup K_l$-good for $n\geq3$ and $m>l\geq2$.
\end{coro}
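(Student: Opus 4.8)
The plan is to deduce Corollary \ref{coro} directly from Theorem \ref{Tnn} by taking $G = T_n$, so that the entire task reduces to verifying the two hypotheses of that theorem for a tree on $n$ vertices. Since $m > l \geq 2$ forces $m \geq 3$ and hence $m - 1 \geq 2$, Theorem \ref{Tn} yields $R(T_n, 2K_{m-1}) = (n-1)(m-2) + 2$, which is precisely the assertion that $T_n$ is $2K_{m-1}$-good. This settles the first hypothesis immediately, with no further work.

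For the second hypothesis I must exhibit two degree-one vertices $u, v$ of $T_n$ such that $T_n - \{u,v\}$ is $K_m$-good. First I would recall that every tree on at least two vertices has at least two leaves, so I may choose two distinct leaves $u$ and $v$ of $T_n$. The point that requires a short argument is that $T_n - \{u,v\}$ is again a tree: deleting the leaf $u$ leaves a tree on $n-1$ vertices, and since $n \geq 3$ the two leaves $u$ and $v$ cannot be adjacent (otherwise $T_n$ would be a single edge on $n = 2$ vertices), so $v$ remains a leaf after the removal of $u$; deleting $v$ then produces a tree $T_{n-2}$ on $n-2$ vertices. By Chv\'atal's theorem every tree is $K_m$-good, so $T_{n-2}$ is $K_m$-good, which is exactly what the second hypothesis demands. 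Invoking Theorem \ref{Tnn} then gives that $T_n$ is $K_m \cup K_l$-good.

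The only genuinely delicate point is the connectivity argument above, together with the degenerate boundary case $n = 3$, in which $T_{n-2}$ collapses to the single-vertex graph $K_1$; here I would simply observe that $K_1$ is trivially $K_m$-good, since $R(K_1, K_m) = 1 = (v(K_1)-1)(\chi(K_m)-1) + s(K_m)$, so the hypothesis of Theorem \ref{Tnn} remains satisfied. Everything else is a direct substitution into the already-established Theorems \ref{Tn} and \ref{Tnn} and Chv\'atal's classical result, so no additional estimates or constructions are needed.
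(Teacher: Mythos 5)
Your proposal is correct and follows exactly the paper's route: the paper derives the corollary in one line by setting $G=T_n$ in Theorem \ref{Tnn} and invoking Theorem \ref{Tn} (for $2K_{m-1}$-goodness, valid since $m>l\geq 2$ gives $m-1\geq 2$) together with Chv\'atal's theorem (for the $K_m$-goodness of the tree $T_n-\{u,v\}$ obtained by deleting two leaves). Your additional checks---that two leaves of a tree on $n\geq 3$ vertices are nonadjacent so $T_n-\{u,v\}$ is again a tree, and the degenerate case $n=3$ where it is $K_1$---are correct and merely make explicit what the paper leaves implicit.
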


\section{The Ramsey number of a forest versus a disjoint union of complete graphs}
In \cite{St}, Stahl  applied Chv\'atal's theorem about the exact value of $R(T_n,K_m)$ to determine the Ramsey number of a forest versus $K_m$. 
In this section, we also obtain the Ramsey number $R(F,K_m\cup K_l)$ by applying  Theorems \ref{Tn} and Corollary \ref{coro}, where $F$ is a forest and $m,l\geq2$. 
Indeed, we prove a general result under some conditions. 
Before that, we extend the Ramsey goodness of  connected graphs to all graphs. 
Close behind the lower bound for a connected graph $G$ and $H$ with $|V(G)|\geq s(H)$ given by Burr \cite{B}  that $R(G,H)\geq (|V(G)|-1)(\chi(H)-1)+s(G)$, 
%A natural question is to consider whether there is a general lower bound for $G$ and $H$ whether G is connected or disconnected? 
%Inspired by Burr's lower bound, 
Gould and Jacobson \cite{GJ} observed a construction which yields a general lower bound of $R(\mathcal{F},H)$ for arbitrary graph $\mathcal{F}$ and $H$. 

\begin{theo}{\rm (Gould-Jacobson \cite{GJ})}\label{GJ}
Let $\mathcal{F}$ be a graph, let $F_1,F_2,\cdots, F_{r-1}$ and $F_r$ be the  components (maximal  connected subgraphs) of $\mathcal{F}$. %, where $r\geq1$ is a positive integer. 
Let  $I$ be the set of orders of $F_1,F_2,\cdots, F_r$, i.e., for every $i\in I$, there exists at least a graph $F_l$ with $|V(F_l)|=i$, $l\in[r]$. 
Let $n(\mathcal{F})$ be the maximum element in $I$ and $k_i(\mathcal{F})$ be the number of components with order $i$.  
Let $H$ be a graph, let  $\beta_i=R(F_i,H)-(|V(F_i)-1|)(\chi(H)-1)-s(H)$ and 
$$p=\max_{ j\in I}\Bigg\{(j-1)(\chi(H)-2)+\sum_{i=j}^{n(\mathcal{F})}ik_i(\mathcal{F})\Bigg\}+s(H)-1.$$  then $$p\leq R(\mathcal{F},H)\leq p+\max_{i}(\beta_i).$$ 
\end{theo}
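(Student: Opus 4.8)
The plan is to establish the two inequalities separately: the lower bound $p\le R(\mathcal{F},H)$ by exhibiting a red/blue colouring of $K_{p-1}$ with no red $\mathcal{F}$ and no blue $H$, generalising the colouring behind Theorem~\ref{Burr}; and the upper bound $R(\mathcal{F},H)\le p+\max_i\beta_i$ by embedding the components of $\mathcal{F}$ greedily, one at a time, into any colouring of $K_{p+\max_i\beta_i}$. Throughout I write $S_j:=\sum_{i=j}^{n(\mathcal{F})} i\,k_i(\mathcal{F})$ for the total number of vertices lying in components of order at least $j$, so that $p=\max_{j\in I}\{(j-1)(\chi(H)-2)+S_j\}+s(H)-1$, and I relabel the components so that $v(F_1)\ge v(F_2)\ge\cdots\ge v(F_r)$, writing $a_t=v(F_t)$. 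By the definition of $\beta_i$, each $R(F_i,H)=(a_i-1)(\chi(H)-1)+s(H)+\beta_i$.

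For the lower bound, fix $j\in I$ attaining the maximum. I would colour $K_{p-1}$ so that the red graph is a disjoint union of cliques (equivalently, the blue graph is complete multipartite on the same parts): one clique $Q_0$ of order $S_j-1$, together with $\chi(H)-2$ cliques each of order $j-1$, and one further part $W$ of order $s(H)-1$. This uses exactly $(j-1)(\chi(H)-2)+(S_j-1)+(s(H)-1)=p-1$ vertices. Since $W$ has fewer than $s(H)$ vertices, it cannot accommodate any colour class of a proper $\chi(H)$-colouring of $H$, so Burr's surplus argument shows the complete $\chi(H)$-partite blue graph has no blue $H$. On the red side, every component of order at least $j$ is connected and hence must embed inside a single red clique of order at least $j$; in the regime $s(H)\le j$ the only such clique is $Q_0$, whose order $S_j-1$ is strictly smaller than the combined order $S_j$ of these components, so they cannot all be packed and there is no red $\mathcal{F}$. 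This specialises to Theorem~\ref{Burr} when $\mathcal{F}$ is connected, where $I=\{v(\mathcal{F})\}$ and $S_{v(\mathcal{F})}=v(\mathcal{F})$.

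For the upper bound, set $N=p+\max_i\beta_i$ and take any colouring of $K_N$ with no blue $H$. I build a red $\mathcal{F}$ greedily in the order $F_1,F_2,\dots,F_r$. Before placing $F_t$ we have used $a_1+\cdots+a_{t-1}$ vertices, and since $a_1,\dots,a_{t-1}\ge a_t$ these components are all counted in $S_{a_t}$, so $a_1+\cdots+a_{t-1}\le S_{a_t}-a_t$. Using the identity $(S_{a_t}-a_t)+(a_t-1)(\chi(H)-1)=(a_t-1)(\chi(H)-2)+S_{a_t}-1\le p-s(H)$, I obtain
\[
(a_1+\cdots+a_{t-1})+R(F_t,H)\le (p-s(H))+s(H)+\beta_t = p+\beta_t\le N.
\]
Hence the untouched part of $K_N$ still has at least $R(F_t,H)$ vertices, so it contains a red $F_t$ or a blue $H$; as there is no blue $H$, I extract a red $F_t$, delete its vertices, and iterate until all of $\mathcal{F}$ appears in red.

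The main obstacle is the lower bound construction in the remaining regime $s(H)>j$, where the surplus part $W$ has order $s(H)-1\ge j$ and could itself host components of order at least $j$; then $Q_0$ is no longer the unique ``large'' clique and the packing obstruction can fail (this already occurs for $\mathcal{F}=tK_2$ against $H=K_{3,3}$, where a red clique $W$ of order $s(H)-1=2$ would supply an extra matching edge). Handling this requires redistributing the $s(H)-1$ surplus vertices — for instance splitting them into singleton parts, or attaching them so as not to create a second clique able to absorb the large components — while still certifying that the blue graph omits $H$; reconciling the packing requirement on the red side with the surplus requirement on the blue side is the delicate point, and is precisely where the maximising index $j$ must be chosen with care. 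A secondary technical point is to justify rigorously that each component embeds into a single red clique for arbitrary (not necessarily complete) components, which follows from connectivity but should be stated explicitly.
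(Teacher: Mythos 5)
The paper does not prove this statement at all --- it imports it verbatim from Gould--Jacobson \cite{GJ} --- so your proposal can only be judged on its own merits. Your upper bound argument is correct and complete: ordering the components so that $a_1\ge a_2\ge\cdots\ge a_r$, the inequality $a_1+\cdots+a_{t-1}\le S_{a_t}-a_t$ together with the identity $(S_{a_t}-a_t)+(a_t-1)(\chi(H)-1)=(a_t-1)(\chi(H)-2)+S_{a_t}-1\le p-s(H)$ does show that at least $R(F_t,H)$ untouched vertices remain at every step, so the greedy extraction goes through. (This is essentially the same descending-by-order embedding that the paper later uses to prove its own Theorem \ref{mathcal{F}}.)

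The genuine gap is in the lower bound, and you have correctly diagnosed it yourself: your colouring of $K_{p-1}$ (red cliques of orders $S_j-1$, $j-1$ taken $\chi(H)-2$ times, and $s(H)-1$) only certifies the absence of a red $\mathcal{F}$ when $s(H)\le j$ for the maximising index $j$, since otherwise the part $W$ of order $s(H)-1\ge j$ is itself a red clique capable of absorbing a component of order $\ge j$; your own instance $\mathcal{F}=tK_2$, $H=K_{3,3}$ shows the red graph $K_{2t-1}\cup K_2$ really does contain $tK_2$, so the construction fails rather than merely the analysis. The repair you gesture at --- splitting $W$ into red-independent singletons --- is not automatic either: it changes the blue graph to $K_{s(H)-1}\vee(\text{complete }(\chi(H)-1)\text{-partite})$, and verifying that this omits $H$ requires a separate argument that you do not supply and that is not true for arbitrary $H$. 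Since the theorem as stated carries no hypothesis relating $s(H)$ to the orders in $I$ (note that even Burr's Theorem \ref{Burr} needs $v(G)\ge s(H)$), your proof establishes the lower bound only in the regime $s(H)\le j$. That regime does cover every application made in this paper (for $H=K_m\cup K_l$ one has $s(H)\le 2$, and every forest component considered has at least $2$ vertices), but it does not prove the statement in the generality in which it is asserted.
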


We say that a graph $\mathcal{F}$ is $H$-good if the equality  $$R(\mathcal{F},H)= \max_{ j\in I}\Bigg\{(j-1)(\chi(H)-2)+\sum_{i=j}^{n(\mathcal{F})}ik_i(\mathcal{F})\Bigg\}+s(H)-1$$ holds under the conditions of Theorem \ref{GJ} and $s(H)\geq j_0$, where $j_0\in I$ be the maximum integer satisfying $(j_0-1)(\chi(H)-2)+\sum_{i=j_0}^{n(\mathcal{F})}ik_i(\mathcal{F})=\max_{ j\in I}\left\{(j-1)(\chi(H)-2)+\sum_{i=j}^{n(\mathcal{F})}ik_i(\mathcal{F})\right\}$. 
This definition is consistent for a connected graph with the definition of Ramsey-goodness given by Burr. 
We extend the upper bound given by Theorem \ref{GJ}. 
Before giving the upper bound  of $R(\mathcal{F},H)$ (Theorem \ref{mathcal{F}}), 
we remark the following consequence which is implicit in  Theorem \ref{GJ}. 
\begin{remark}\label{5.1}
Let  $H$ be a graph and let $F$ be the disjoint union of graphs $F_1, F_2,\dots, F_k$,  where each of  $F_1,F_2,\cdots, F_k$ has $n$ vertices. 
Then 
$$R(F,H)\leq \max_{i\in[k]}\left\{R(F_i,H)\right\}+n(k-1).$$
 Moreover,  if  $F_i$ is a connected and $H$-good graph with $n\geq s(H)$ for each $i\in[r]$, then  $F$ is $H$-good.  
\end{remark}

\begin{theo}\label{mathcal{F}}
Let $H$ be a graph, let $\mathcal{F}$ be the disjoint union of  graphs $F_1,F_2,\cdots, F_{r-1}$ and $F_r$. 
Let  $I$ be the set of orders of $F_1,F_2,\cdots, F_{r}$, i.e., for every $i\in I$, there exists at least a graph $F_l$ with $|V(F_l)|=i$, $l\in[r]$. 
Let $n(\mathcal{F})$ be the maximum element in $I$ and $k_i(\mathcal{F})$ be the number of graphs with order $i$. Then
%$F_i$ be a $tK_m$-good  connected graph with $n_i$ vertices, where $n_i\geq t$ and $i\in[k]$. Denote the disjoint union of $F_1, F_2,\dots, F_k$ by $F$, then 
$$R(\mathcal{F},H)\leq\max_{ j\in I}\Bigg\{\max_{|V(F_p)|=j,p\in[r]}\left\{R(F_p,H)\right\}+\sum_{i=j}^{n(\mathcal{F})}ik_i(\mathcal{F})-j\Bigg\}.$$ 
%where $F^j\in\{F_1,F_2,\cdots, F_r\}$ satisfies $|V(F^j)|=j$. 
Moreover, if  $F_i$ is a connected and $H$-good graph with $|V(F_i)|\geq s(H)$ for each $i\in[r]$, then  $\mathcal{F}$ is $H$-good.  
\end{theo}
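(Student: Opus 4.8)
The plan is to prove the stated upper bound by a greedy sequential embedding of the components in weakly decreasing order of their sizes, and then to deduce the ``moreover'' statement by substituting the goodness values of the components into that bound and matching the outcome with the lower bound supplied by Theorem \ref{GJ}.

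For the upper bound, let $N$ denote the claimed right-hand side, color the edges of $K_N$ red or blue, assume there is no blue $H$, and show there must be a red $\mathcal{F}$. I would relabel the components so that $|V(F_1)| \geq |V(F_2)| \geq \cdots \geq |V(F_r)|$ and embed them one at a time in this order, at each step deleting the vertices used by the copies already found and working inside the complete graph induced on the vertices that remain. Writing $V_{\geq j} = \sum_{i=j}^{n(\mathcal{F})} i k_i(\mathcal{F})$ for the number of vertices lying in components of order at least $j$, the crucial counting observation is that when it is time to embed a component $F_p$ of order $j$, the number of already-occupied vertices is at most $V_{\geq j} - j$: every earlier-embedded component has order $\geq j$, and among those of order exactly $j$ at most $k_j - 1$ precede $F_p$, so at most $V_{>j} + (k_j - 1)j = V_{\geq j} - j$ vertices are used. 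Hence the remaining complete graph has at least $N - (V_{\geq j} - j) \geq R(F_p, H)$ vertices, where the inequality is exactly the defining property of $N$ together with $R(F_p, H) \leq \max_{|V(F_q)|=j} R(F_q, H)$. Since there is no blue $H$ anywhere in $K_N$, hence none in any induced subgraph, this forces a red $F_p$; iterating through all $r$ components produces pairwise-disjoint red copies, that is, a red $\mathcal{F}$.

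For the ``moreover'' statement it suffices to establish $R(\mathcal{F}, H) \leq p$, since the matching lower bound $R(\mathcal{F}, H) \geq p$ is already provided by Theorem \ref{GJ}. Because each $F_p$ is connected and $H$-good with $|V(F_p)| \geq s(H)$, I would substitute $R(F_p, H) = (j-1)(\chi(H)-1) + s(H)$ for every component of order $j$; in particular $\max_{|V(F_p)|=j} R(F_p, H)$ no longer depends on the chosen component. Inserting this into the upper bound and using the identity $(j-1)(\chi(H)-1) - j = (j-1)(\chi(H)-2) - 1$, each term of the maximum collapses to $(j-1)(\chi(H)-2) + \sum_{i=j}^{n(\mathcal{F})} i k_i(\mathcal{F}) + s(H) - 1$, whose maximum over $j \in I$ is precisely $p$. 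Combining the two bounds yields $R(\mathcal{F}, H) = p$, which is exactly the definition of $\mathcal{F}$ being $H$-good.

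I expect the main obstacle to be the bookkeeping in the greedy step, specifically verifying that the bound $U_p \leq V_{\geq j} - j$ on the number of previously used vertices holds uniformly over all components of a given size class, independent of the order chosen within that class; this is what makes the greedy succeed against the worst pairing encoded in $\max_{|V(F_q)|=j} R(F_q, H)$. The remaining ingredients, namely the Ramsey step at each iteration and the algebraic simplification for goodness, are routine.
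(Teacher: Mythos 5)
Your proof is correct and follows essentially the same strategy as the paper: the paper also fixes $N$ to be the claimed bound, assumes no blue $H$, and embeds the components in descending order of size via the identical vertex count $N-\left(\sum_{i=j}^{n(\mathcal{F})}ik_i(\mathcal{F})-j\right)\geq\max_{|V(F_p)|=j}R(F_p,H)$, then derives the ``moreover'' part by the same substitution against the lower bound of Theorem \ref{GJ}. The only cosmetic difference is that the paper processes each size class as a single batch (invoking Remark \ref{5.1} for components of equal order) rather than one component at a time.
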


\begin{proof}
 Let  $$N=\max_{ j\in I}\Bigg\{\max_{|V(F_p)|=j,p\in[r]}\left\{R(F_p,H)\right\}+\sum_{i=j}^{n(\mathcal{F})}ik_i(\mathcal{F})-j\Bigg\}$$ and the edges of $K_N$ be colored by red or blue. 
Now we  prove the upper bound by assuming that there is no blue $H$ and using the descending  induction to show the existence of a red $\mathcal{F}$ in $K_N$.   
Let $G_j$ be the  graph consisting of all $F_i$ with order at least $j$.  
Clearly, 
\begin{eqnarray*}
N&\geq& \max_{|V(F_i)|=n(\mathcal{F})}\left\{R(F_i,H)\right\}+n(\mathcal{F})k_{n(\mathcal{F})}(\mathcal{F})-n(\mathcal{F})\\
&=& \max_{|V(F_i)|=n(\mathcal{F})}\left\{R(F_i,H)\right\}+n(\mathcal{F})(k_{n(\mathcal{F})}(\mathcal{F})-1)\\
& \overset{Remark\; \ref{5.1}}{\geq}& R(G_{n(\mathcal{F})},H).
\end{eqnarray*}
Apply the assumption that there is no blue $H$, there is a red $G_{n(\mathcal{F})}$ in $K_N$. 
So the base case holds.  
Now  assume that there is a red $G_{j+1}$ in $K_N$, our goal is to show that there is a red $G_{j}$ in $K_N$.  
Since $|V(G_{j+1})|=\sum_{i=j+1}^{n(\mathcal{F})}ik_i(\mathcal{F})$, then the order of $K_N-G_{j+1}$ is
\begin{eqnarray*}
N-\sum_{i=j+1}^{n(\mathcal{F})}ik_i(\mathcal{F})&\geq& \max_{|V(F_p)|=j,p\in[r]}\left\{R(F_p,H)\right\}+\sum_{i=j}^{n(\mathcal{F})}ik_i(\mathcal{F})-j-\sum_{i=j+1}^{n(\mathcal{F})}ik_i(\mathcal{F})\\
&=& \max_{|V(F_p)|=j,p\in[r]}\left\{R(F_p,H)\right\}+jk_j(\mathcal{F})-j\\
&=& \max_{|V(F_p)|=j,p\in[r]}\left\{R(F_p,H)\right\}+j(k_j(\mathcal{F})-1)\\
& \overset{Remark\; \ref{5.1}}{\geq}& R(G_j-G_{j+1},H).
\end{eqnarray*}
Hence there is a red $G_j-G_{j+1}$ in $K_N-G_{j+1}$, which yields a red $G_j$ in $K_N$. 
By induction  the proof of the first part of the theorem  is completed. 

If   $F_i$ is a connected and $H$-good graph with $|V(F_i)|\geq s(H)$ for each $i\in[r]$, 
then $R(\mathcal{F},H)\leq\max_{ j\in I}\left\{(j-1)(\chi(H)-1)+s(H)+\sum_{i=j}^{n(\mathcal{F})}ik_i(\mathcal{F})-j\right\}.$ 
Combining with the lower bound of Theorem \ref{GJ}, we obtain that $\mathcal{F}$ is $H$-good.  
\end{proof}

By Corollary \ref{coro} and Theorem \ref{mathcal{F}}, we can  obtain the following exact value of $R(F,K_m\cup K_l)$, where $F$ is a forest and $m,l\geq2$, and a forest is $K_m\cup K_l$-good. 

\begin{coro}
Let $m\geq l \geq 2$ be positive integers. 
%Suppose that there exists $n_0=n(t,m)$, such that $T_n$ is $tK_m-$good for $n\geq n_0$. 
Let $F$ be a forest, then $F$ is $K_m\cup K_l$-good.  %and  $I$ be the set of orders of components of $F$ satisfying that every  coordinate of $I$  is no less than $s(K_m\cup K_l)$.  Let  $k_i(F)$ denote the number of  components of $F$ which have exactly  $i$ vertices, and $n(F)$ denote the order of the largest component of $F$. Then $$R(F,K_m\cup K_l)=\max_{ j\in I}\Bigg\{(j-1)(m-2)+\sum_{i=j}^{n(F)}ik_i(F)\Bigg\}+s(K_m\cup K_l)-1.$$ 
\end{coro}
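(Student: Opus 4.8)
The plan is to peel the forest into its tree components and then lift Ramsey goodness from the components to the whole forest by means of Theorem~\ref{mathcal{F}}. Since $F$ is a forest, each of its components is a tree; write $F=T_{n_1}\cup T_{n_2}\cup\cdots\cup T_{n_r}$ with $n_i=|V(T_{n_i})|$. For the target graph $H=K_m\cup K_l$ one has $\chi(H)=m$, while the chromatic surplus is $s(H)=1$ when $m>l$ and $s(H)=2$ when $m=l$; I would run the argument in these two regimes in parallel.

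First I would record component-wise goodness. When $m>l$, Corollary~\ref{coro} states that every tree on at least three vertices is $K_m\cup K_l$-good; when $m=l$ we have $H=2K_m$, and Theorem~\ref{Tn} states that every such tree is $2K_m$-good. In either regime the order of each component satisfies the size requirement $n_i\ge s(H)$: this is automatic when $s(H)=1$, and when $s(H)=2$ it holds because any tree that contains an edge has at least two vertices.

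With component-wise goodness available, I would invoke the ``moreover'' part of Theorem~\ref{mathcal{F}}: if each component $F_i$ is connected and $H$-good with $|V(F_i)|\ge s(H)$, then $\mathcal F$ is $H$-good. Taking $\mathcal F=F$ and $F_i=T_{n_i}$ identifies $R(F,K_m\cup K_l)$ with the Gould--Jacobson value of Theorem~\ref{GJ}, which is precisely the statement that $F$ is $K_m\cup K_l$-good.

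The main obstacle is the treatment of the smallest components, namely trees of order two (single edges $K_2$), which lie outside the range $n\ge3$ of Corollary~\ref{coro} and Theorem~\ref{Tn} and are not themselves $H$-good: one checks $R(K_2,K_m\cup K_l)=m+l$, strictly larger than the value predicted by goodness. Consequently the generic upper bound of Theorem~\ref{mathcal{F}} need not be tight when such components are present, so the delicate point is to show that the Gould--Jacobson lower bound is nonetheless attained. I expect this to come down to locating where the maximum defining the lower bound is achieved: one fixes the largest order $j_0$ realizing that maximum, verifies that the corresponding component is genuinely good, and then argues that, after a red copy of the large components has been embedded, the edges joining it to the blue blocks that would otherwise complete the remaining small components are forced to be red on pain of creating a blue $K_m\cup K_l$ --- the same forcing mechanism used in the proofs of Lemmas~\ref{1} and \ref{2}. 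Once this bookkeeping is carried out the upper and lower bounds coincide and the desired equality $R(F,K_m\cup K_l)=p$ follows.
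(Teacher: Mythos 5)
Your first two paragraphs reproduce exactly the paper's proof: the paper obtains this corollary by combining Corollary \ref{coro} (and, implicitly, Theorem \ref{Tn} for the case $m=l$) with the ``moreover'' clause of Theorem \ref{mathcal{F}}, just as you do, and your computation of $\chi(K_m\cup K_l)=m$ and of $s(K_m\cup K_l)$ in the two regimes $m>l$ and $m=l$ is correct. If every component of $F$ has at least three vertices, this is a complete argument and coincides with the paper's.

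The obstacle you raise in your final paragraph --- components isomorphic to $K_2$ --- is a genuine gap, and one the paper passes over in silence; but your proposed repair cannot succeed, because the statement is actually false for such forests. Take $F=2K_2$ and $H=K_3\cup K_2$ (so $m=3$, $l=2$, $\chi(H)=3$, $s(H)=1$). The Gould--Jacobson value of Theorem \ref{GJ} is $p=(2-1)(3-2)+2\cdot2+s(H)-1=5$, yet colouring $K_5$ with a red triangle on three fixed vertices and all remaining edges blue yields no red $2K_2$ (all red edges lie inside a set of three vertices, so no two of them are disjoint) and no blue $K_3\cup K_2$ (any split of the five vertices into a triple and a pair either places the pair inside the red triangle, or places two red-triangle vertices in the triple; either way a red edge intrudes). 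Hence $R(2K_2,K_3\cup K_2)\geq 6>p$, so $2K_2$ is not $K_3\cup K_2$-good, and no forcing argument of the kind you sketch can recover the equality $R(F,H)=p$ there. The honest conclusion is that the corollary requires the additional hypothesis that every component of $F$ has order at least $3$ (equivalently, that every component is itself $K_m\cup K_l$-good, as the ``moreover'' clause of Theorem \ref{mathcal{F}} demands); under that hypothesis your argument --- and the paper's --- goes through verbatim, while forests containing $K_2$ components need a different formula, not merely a more careful proof of this one.
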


\section{Remarks}
Determining $R(G,H)$ in general is a very challenging problem.  
When we focus on the  problem related to Ramsey goodness, we are interested in exploring what kind of conditions can yield good Ramsey goodness property of graphs.  
As mentioned in Section $1$, nice work in \cite{BPS, C, CH, PS} concern this type of conditions. 
Theorem \ref{mathcal{F}} shows that if every component of a disconnected graph $F$ is $H$-good, then $F$ is $H$-good.  
It is natural to study whether $F$ is $H$-good and  $G$-good implies that $F$ is $H\cup G$-good. 
Clearly, this is not true for graphs such as $K_n$. 
An obvious result is that any connected graph $F$ is $K_2$-good, 
but $R(K_n,2K_2)=n+2$, which implies that $K_n$ is not $2K_2$-good.  
%$R(K_n,tK_2)=n+2t-2$. 
Certainly, some results  provide evidence for that the answer of this question is yes for some  $F$.   
Sudarsana \cite{Su} showed that $P_n$ with $n\geq (t-2)((tm-2)(m-1)+1)+3$ is $tK_m$-good for  $m$ and $t\geq2$ be integers.  
Indeed, the condition on the number of vertices $n$ can not  be released completely.  
For example,  we can prove that $T_n$ is not $tK_2$-good when $n\leq t$ (actually we determined the exact Ramsey number $R(T_n,tK_2)$ in \cite{HP}).   
It's interesting to explore what kind of graph $F$ can satisfy that $F$ is $H$-good and $G$-good implies that $F$ is $H\cup G$-good when the number of vertices in $F$ is sufficiently large.

\end{document}